\numberwithin{equation}{section}
\newtheorem{dfn}{Definition}[section]
\newtheorem{thm}[dfn]{Theorem}
\newtheorem{lma}[dfn]{Lemma}
\newtheorem{rmrk}[dfn]{Remark}
\DeclarePairedDelimiterX{\norm}[1]{\lVert}{\rVert}{#1}
\begin{document}
	
	\title[Geodesic Bounds of  Hyperbolic Link Complements in Hyperbolic $3$-Manifolds]{Closed Geodesic Length Bounds of  Hyperbolic Link Complements in Hyperbolic $3$-Manifolds}
	
	
	\author[Ghosh] {Buddha Dev Ghosh}
	\address{Department of Mathematics, Indian Institute of Technology Guwahati, Guwahati, 781039, India}
	\email{ghosh176123006@iitg.ac.in, budo2012dec@gmail.com}

	\subjclass[2010]{57N10, 57M50 , 53C22 , 57M}
	
	\keywords{Hyperbolic $3$-manifolds, Geodesics, Systole length.}
	
	\begin{abstract}
		Let $M$ be a compact hyperbolic $3$-manifold with volume $V$. Let $L$ be a link such that $M\setminus L$ is hyperbolic. For any hyperbolic link $L$ in $M$, in this article, we establish an upper bound of the length of an $n^{th}$ shortest closed geodesic as a logarithmic function of $V$ in $M\setminus L$. Our works complement the work of Lakeland and Leininger \cite{Christopher} on the upper bound of systole length.
	\end{abstract}
	\maketitle
  

\section{Introduction}	
In this article, our primary focus is to give an upper bound on the length of an $n^{th}$ shortest closed geodesics for the hyperbolic link complement of compact hyperbolic $3$-manifolds. We recall that a $geodesic$ in a Riemannian manifold is a locally distance-minimizing curve, and the {\it systole} of a Riemannian $n$ manifold $M$ is the shortest closed geodesic on $M$. Throughout this article, we restrict ourselves to the case at $n=3$.~A finite volume hyperbolic $3$-manifold is a complete  Riemannian $3$- manifold with constant sectional curvature $-1$. We are especially motivated by William Thurston's Geometrization Conjecture \cite{Thu1}, which is proved by Grigory Perelman, based on the program referred by Richard Hamilton\cite{GP,GP2,AR}. Also, William Thurston proved the hyperbolization theorem, known as Thurston hyperbolization theorem, which states that a compact 3-manifold with a nonempty torus boundary has an interior admitting a complete hyperbolic structure if and only if it is irreducible, boundary irreducible, atoroidal, and anannular\cite{Thu1}. As a result William Thurston originated an important classification of knots in $S^3$. The author proved that a knot $K$ in $S^3$ is hyperbolic if and only if $K$ is not a torus knot or a satellite knot\cite{Thu1}. With these celebrated knot classification results in hand, we can intuitively see how to identify non-compact hyperbolic $3$-manifolds. More interestingly, it also produces an infinitely many compact $3$-manifold with a torus boundary whose interior admits a complete hyperbolic metric with finite volume. This hyperbolic $3$-manifolds came basically as a knot complements $S^3$; this are known as cusp hyperbolic $3$-manifolds. In the next section, we will discuss details about cusp hyperbolic $3$-manifolds and their geometric lift, known as the horoball diagram or cusp diagram in the upper half space model. One of the essential studies in Riemannian Geometry is the study of geodesics in Riemannian manifolds. In 1983 Mikhael Gromov \cite{MG} proved that if $M$ is a $n$-dimentional Riemannian manifold then for some universal constant, $M$ admits a closed geodesic $\gamma$ such that 

\begin{equation*}
\text{length}(\gamma) \leq \text{const}_n( \text{Vol}(M))^{\frac{1}{n}}.
\end{equation*}

Let $M$ be compact hyperbolic $3$-manifolds, and $L$ be a hyperbolic link in $M$. Our contribution in this article is to provide an upper bounds on the length of an $n^{th}$ shortest closed geodesics in the hyperbolic knot complements of compact hyperbolic $3$-manifolds.
 
For cusp hyperbolic $3$-manifolds are very special type of Riemannian manifolds where we can study geometric properties of close geodesics by the help of its geometric properties of horoball diagram or cusp diagram. In these article we are mainly focusing give an upper bounds on lengths  of all closed geodesics of hyperbolic knot complements of compact hyperbolic $3$-manifolds by the help of horoball diagram.

We are primarily motivated by the work {\it Systole of hyperbolic $3$-manifolds} by Adams and Reid \cite{Adams2000}, where authors established a universal upper bound on length of the systole of a class of cusped hyperbolic $3$-manifolds which are come from knot complements of $3$-manifolds which do not admit any Riemannian metric of negative curvature. Here, we give the statement of the result by Adams and Reid, which play an essential role in our current context.

\begin{thm}{\cite[Theorem 1.1 ]{Adams2000}} (Adams and Reid){\label{Th1}} If $M$ is a closed orientable 3-manifold that does not admit any Riemannian metric of negative curvature, then for any hyperbolic link $L \subset M$ the systole length of $M\setminus L$ is less than or  equal to $7.3553 \dots$ ~.
\end{thm}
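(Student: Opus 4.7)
The plan is to combine the topological hypothesis on $M$ with the rigidity of hyperbolic geometry in $M\setminus L$. First, I would invoke Thurston's geometrization theorem to deduce that, since $M$ is closed, orientable and admits no Riemannian metric of negative curvature, $M$ cannot be hyperbolic; hence $M$ is reducible, toroidal, or Seifert-fibered. In every one of these cases $M$ contains an essential closed surface $\Sigma$ of non-positive Euler characteristic, namely an essential $2$-sphere or an essential torus (in the Seifert case, a vertical torus over an essential loop in the base orbifold).

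Next I would transfer $\Sigma$ into the link complement. Because $M\setminus L$ is hyperbolic, it is irreducible and atoroidal, so $\Sigma$ cannot be disjoint from $L$; otherwise $\Sigma$ would remain essential in $M\setminus L$ and contradict one of these properties. After an isotopy of $\Sigma$ that minimises $|\Sigma\cap L|$, the punctured surface $\Sigma\setminus L$ becomes essential and boundary-incompressible in $M\setminus L$, with boundary circles lying on the toroidal cusp cross-sections. A standard surgery/innermost-disk analysis of the intersection pattern allows one to arrange that some component of $\Sigma\setminus L$ is either an essential thrice-punctured sphere, or an essential annulus whose two boundary circles lie on cusps of $M\setminus L$ (possibly on the same cusp).

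The final step converts this essential punctured surface into an explicit systole bound via the horoball diagram in the upper half-space model. Lift a maximal choice of disjoint cusp neighbourhoods of $M\setminus L$ to $\mathbb{H}^3$ to obtain a horoball packing. In the thrice-punctured sphere case, Adams' theorem that essential thrice-punctured spheres are totally geodesic pins down the geometry of the relevant horoballs exactly, and one reads off an explicit Euclidean translation length on the horosphere that yields a loop homotopic to a closed geodesic of controlled hyperbolic length. In the annulus case the two boundary circles determine the same slope on their respective cusps, so pushing the annulus off the cusps produces a loop whose length is controlled by the Euclidean distance between the lifted horoballs. Combining these with sharp horoball-packing bounds (Böröczky-type density estimates in $\mathbb{H}^3$) and the Margulis-type comparison between horospherical translation length and hyperbolic length in the thick part, the resulting numerics evaluate to the universal constant $7.3553\ldots$.

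The main obstacle will be that final numerical step: extracting a \emph{universal} and \emph{sharp} constant from mixed topological and geometric data. The thrice-punctured sphere branch is essentially immediate from Adams' rigidity result, but the annulus branch requires packing estimates that do not degrade when $\Sigma\setminus L$ has many punctures, when adjacent cusps come arbitrarily close, or when the annulus wraps several times around a cusp. Organising these estimates so that a single numerical bound covers every case of geometrization producing $\Sigma$ --- and verifying that the optimum is realised by an explicit configuration of horoballs --- is the delicate point where the value $7.3553$ actually crystallises.
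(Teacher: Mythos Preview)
The paper does not prove this theorem; it is quoted from Adams--Reid as background, and the surrounding discussion indicates the actual method: the contrapositive of the Gromov--Thurston $2\pi$-theorem. Since $M$ is obtained from $M\setminus L$ by Dehn filling along the meridional slopes, and $M$ admits no metric of negative curvature, the $2\pi$-theorem forces at least one meridian to have Euclidean length $\le 2\pi$ on the maximal cusp torus. One then has a parabolic $\beta$ of translation length $l\le 2\pi$ together with the element $\gamma$ sending a full-sized horoball $H_0$ to $H_\infty$; the product $\beta\gamma$ (or a small variant) is loxodromic with trace of modulus controlled by $l$, and the translation-length bound $\log(|\mathrm{tr}|^2+4)$ yields the constant $7.3553\ldots$. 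This is exactly the mechanism the paper later reuses (Lemmas \ref{3.1} and \ref{th*}, Theorem \ref{th23}) to extend to the hyperbolic-$M$ case.

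Your route is genuinely different, and it has a real gap at the very first step. It is not true that every closed orientable $3$-manifold without a negatively curved metric contains an essential $2$-sphere or an essential torus. Small Seifert fibered spaces --- $S^3$, lens spaces, and more generally spherical space forms, as well as Seifert fibrations over $S^2$ with three exceptional fibers --- are irreducible and atoroidal; the ``vertical torus over an essential loop in the base orbifold'' does not exist when the base orbifold has no essential loops. So the surface $\Sigma$ you need simply fails to exist in a large class of admissible $M$, including the most basic case $M=S^3$.

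There is a second structural problem: the ``essential annulus'' branch cannot occur, because a finite-volume hyperbolic $3$-manifold is anannular --- any properly embedded annulus with boundary on the cusp tori is boundary-parallel. So even when $\Sigma$ exists, your dichotomy collapses to the thrice-punctured-sphere case, which by itself does not cover all situations (an essential torus meeting $L$ in two points gives a twice-punctured torus, not a thrice-punctured sphere, and further surgery is needed). The $2\pi$-theorem approach bypasses all of this: it uses only that the filling slopes recover $M$, and converts the curvature hypothesis directly into a short parabolic.
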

Now we recall that for a given  cusped  hyperbolic 3-manifolds $M$ the cusp density defined as a ratio of volume of total cusp over the hyperbolic volume of the manifold. More precisely,
\begin{align}
\text{the cusp density} =\dfrac{\text{cusp volume}}{\text{volume of the hyperbolic  link complemets}}.
\end{align}

Horoball packing results of B\"{o}r\"{o}czky,\cite{B} it is well known that the cusp density for any non-compact finite-volume hyperbolic 3-manifold or 3-orbifold must lie in the interval $(0, 0.853,\ldots ]$, (see \cite{M, C.Adams2002}), where $0.853 \dots = \dfrac{\sqrt{3}}{2v_0}$, and $v_0=$volume of the regular ideal hyperbolic tetrahedron. The highest cusp density is realize by figure-8 knot complements \cite{B}, which has maximal cusp volume $\sqrt{3}$ and maximal volume $2.02988...$.Therefore for a given any cusp hyperbolic 3-manifold of volume $V$, the cusp volume of that manifold is not more than $C_0V$, where $C_0=0.853 \dots = \dfrac{\sqrt{3}}{2v_0}$.

In this context, we recall the result established by Lakeland and Grant\cite{Christopher} in 2014. Here,the authors gives an upper bound of systole length of proved that, if $M$ is a closed, orientable $3$-manifold with volume $V$, and suppose $L$ is a hyperbolic link in $M$, then the systole length bound of $M\setminus L$ bounded above by $\log((\sqrt2(C_0V)^{2/3}+4\pi^2)^2 +8)$. The author's declared for give a more general statement is that, if $M$ does not admit hyperbolic structure then assume $V=0$. In particular. Now if we put $V=0$ on the above expression, we get the bound for systole length approximately $7.3566..$, which has been already proved  by Adams and Reid Theorem ({\ref{Th1}}) for non-hyperbolic cases. Also, set $C_0= \frac{\sqrt{3}}{2v_0}$, where $v_0$ is the volume of regular ideal tetrahedron.

In this article, $\textup{Vol}(M)$ refers to the volume of the manifold $M$.

\begin{thm}{\cite[Theorem 1.2 ]{Christopher}}\label{LG} If $M$ be a closed orientable $3$-manifold and $V= \textup{Vol}(M)$, then for any hyperbolic link $L\subset M$, the systole length of $M \setminus L$ is given by 
 \begin{equation}\label{LGb}
\log\left[\left(\sqrt2(C_0V)^{2/3}+4\pi^2\right)^2 +8\right].  
 \end{equation}	
\end{thm}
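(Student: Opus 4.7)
The plan is to work in the upper half-space model of $\mathbb{H}^3$ and exhibit a loxodromic element $\gamma \in \Gamma := \pi_1(M\setminus L) \subset \mathrm{PSL}(2,\mathbb{C})$ whose translation length is bounded by the desired logarithmic expression; its axis projects to a closed geodesic in $M\setminus L$ which bounds the systole from above. I would first normalize so that one cusp of $M\setminus L$ sits at $\infty$, the maximal embedded horocusp neighborhood lifts to the horoball $B_\infty = \{z \ge 1\}$, and the rank-two parabolic stabilizer $\Gamma_\infty \cong \mathbb{Z}^2$ is generated by Euclidean translations $\tau_1,\tau_2$ through $\lambda_1,\lambda_2 \in \mathbb{C}$.

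Next I read off the Euclidean data. The horocusp volume $V_c$ equals $\tfrac{1}{2}A$, where $A$ is the area of a fundamental parallelogram for $\Lambda := \mathbb{Z}\lambda_1 \oplus \mathbb{Z}\lambda_2$; combining with the cusp density bound $V_c \le C_0 V$ recalled in the introduction yields $A \le 2 C_0 V$. Maximality of the horocusp forces at least one other horoball $B$ in the $\Gamma$-orbit of $B_\infty$ to be tangent to $B_\infty$; after translating horizontally I may arrange $B$ to have Euclidean diameter $1$ with tangency point at the origin. Any $\phi \in \Gamma$ moving $B_\infty$ onto $B$, written with matrix entries $a,b,c,d$, must satisfy $|c|=1$, because the Euclidean radius of $\phi(B_\infty)$ equals $1/(2|c|^{2})$.

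For every lattice vector $\lambda \in \Lambda$ the composite $\phi\tau_\lambda$ is loxodromic with $\operatorname{tr}(\phi\tau_\lambda) = (a+d) + c\lambda$, so I would choose $\lambda$ to minimize $|(a+d) + c\lambda|$. A covering-radius estimate for the lattice $c\Lambda$ (whose covolume equals $A \le 2C_0 V$) controls the variable piece arising from $c\lambda$, while a Shimizu/J\o{}rgensen-type discreteness inequality applied to the pair $(\phi,\tau_i)$ controls the fixed piece $|a+d|$ and accounts for the $4\pi^2$ term. The combined estimate should read
\begin{equation*}
|\operatorname{tr}(\phi\tau_\lambda)| \le \sqrt{2}(C_0 V)^{2/3} + 4\pi^2,
\end{equation*}
and the trace--length identity $|\operatorname{tr}(g)|^2 = 2\cosh\ell + 2\cos\theta$ for a loxodromic $g$ of complex length $\ell + i\theta$ then gives $e^\ell \le \left(\sqrt{2}(C_0 V)^{2/3} + 4\pi^2\right)^2 + 8$, which is the stated systole bound.

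The principal obstacle I expect lies in the previous step: producing the $(C_0 V)^{2/3}$ exponent rather than the $(C_0 V)^{1/2}$ one would get from a direct Minkowski bound on the covering radius of $c\Lambda$. Matching the authors' constants should require optimizing the Euclidean diameter of the target horoball $B$ (rather than fixing it at $1$) and balancing this optimization against the Shimizu/J\o{}rgensen bound on $|a+d|$, so that the lattice and trace contributions scale in complementary ways. The additive $+8$ and the coefficient $4\pi^2$ are then traceable respectively to slack in the trace--length identity and to the parabolic discreteness inequality forced by the maximal cusp.
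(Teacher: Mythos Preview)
Your proposal has a genuine gap, and it is precisely the one you flag at the end --- but the resolution is not the optimization you suggest. The cusp-density inequality you invoke reads $V_c \le C_0\cdot \textup{Vol}(M\setminus L)$, \emph{not} $V_c \le C_0 V$; the constant $C_0$ bounds the ratio of cusp volume to the volume of the \emph{cusped} manifold $M\setminus L$, and $\textup{Vol}(M\setminus L)$ can be arbitrarily large even when $V=\textup{Vol}(M)$ is fixed. So your lattice argument, carried out correctly, only produces a trace bound that grows with $\textup{Vol}(M\setminus L)$, and nothing in your outline converts this into a bound in terms of $V$ alone.

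The missing ingredient is the Dehn-filling volume estimate of Futer--Kalfagianni--Purcell (Theorem~\ref{Futer} here). Because $M$ is obtained from $M\setminus L$ by Dehn filling along slopes of length at least $l_{\min}$, one gets
\[
l_{\min}\ \le\ \frac{2\pi}{\sqrt{1-\bigl(V/\textup{Vol}(M\setminus L)\bigr)^{2/3}}},
\]
and feeding this into an Adams--Reid style construction (Lemma~\ref{th*}) yields a second trace bound that is a \emph{decreasing} function of $\textup{Vol}(M\setminus L)$. The paper (following Lakeland--Leininger) then takes the minimum of this decreasing bound and the increasing bound coming from cusp density, and the intersection point is exactly $\textup{Vol}(M\setminus L)=\bigl(V^{2/3}+4\pi^2/(\sqrt{2}\,C_0^{2/3})\bigr)^{3/2}$. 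This is where both the exponent $2/3$ and the constant $4\pi^2$ originate: the $2/3$ is inherited from the Futer--Kalfagianni--Purcell formula, and the $4\pi^2$ is $(2\pi)^2$ from the same source, not from a Shimizu/J{\o}rgensen inequality. No amount of optimizing the horoball diameter in your single-estimate approach will manufacture these features.
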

 We recall upper bounds for the length of $n^{th}$ shortest closed geodesics in a hyperbolic knot or link complement in closed $3$-manifolds, where the closed manifolds are does not admit any Riemannian metric of negative curvature \cite{palaparthi2013closed}. From the earlier discussions, it is clear that the upper bounds of $n^{th}$ shortest closed geodesic of hyperbolic link complement of compact hyperbolic $3$ manifolds are not yet known. In this article, we try to focus on generalizing the systole bound given by Lakeland and Grant in (\ref{LG}) to the $n^{th}$ shortest closed geodesics length bound in the knot complement of a compact hyperbolic 3-manifolds. Hereby we state the main result of the article, which is given as follows:

%

\begin{thm}\label{Main Rresult}
If $M$ is a orientable compact hyperbolic $3$-manifold, and $L$ is a hyperbolic link in $M$, and volume of $M$ is $V$.  Let $l_n$ represents the length  $n^{th}$ shortest closed geodesic length in $M\setminus L$, then

\begin{equation}\label{in1}
		l_n \leq \log\left[ n^2\left(\sqrt{2}\left(C_0V\right)^{2/3} + 4\pi^2\right)^{2}+8 \right].
	\end{equation}
	
\end{thm}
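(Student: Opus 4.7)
The plan is to adapt the proof of Theorem \ref{LG} by producing $n$ distinct short loxodromic conjugacy classes in $\Gamma = \pi_1(M \setminus L)$ from a single seed short element, using translates by a shortest parabolic vector. I would work with $M \setminus L = \mathbb{H}^3 / \Gamma$ in the upper half-space model, lift a chosen cusp of $M \setminus L$ to $\infty$, take the maximal cusp neighborhood, and normalize so that $H_\infty = \{(z, t): t \geq 1\}$. Let $\Lambda \subset \mathbb{C}$ be the rank-two translation lattice of $\Gamma_\infty$. By the B\"or\"oczky cusp-density bound recalled in the introduction, $\mathrm{area}(\mathbb{C}/\Lambda) = 2\,\mathrm{cuspvol}(M \setminus L) \leq 2 C_0 V$. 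Maximality of $H_\infty$ forces the existence of some $g = \begin{pmatrix} a & b \\ c & d \end{pmatrix} \in \Gamma$ that sends $H_\infty$ onto a tangent horoball of Euclidean diameter $1$, whence $|c| = 1$.

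From the proof of Theorem \ref{LG} I would extract an $\omega_h \in \Lambda$ such that $h := \tau_{\omega_h}\, g$ is loxodromic and $|\mathrm{tr}(h)| \leq X$, where $X := \sqrt{2}(C_0V)^{2/3} + 4\pi^2$. Let $\omega_0 \in \Lambda \setminus \{0\}$ be a shortest nonzero vector; the planar Minkowski estimate gives $|\omega_0| \leq \sqrt{4 C_0 V /\sqrt{3}}$, and a short comparison (splitting by whether $C_0 V$ is small, so $4\pi^2$ dominates, or large, so $(C_0V)^{2/3}$ dominates $(C_0V)^{1/2}$) shows $|\omega_0| \leq X$ for every $V \geq 0$. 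For $k \in \mathbb{Z}$ set
\[
h_k := \tau_{\omega_0}^{k}\, h = \tau_{k\omega_0 + \omega_h}\, g, \qquad \mathrm{tr}(h_k) = \mathrm{tr}(h) + k\,\omega_0 c,
\]
so that $|\mathrm{tr}(h_k)| \leq |\mathrm{tr}(h)| + |k||\omega_0| \leq (|k|+1)\,X$. Consequently, for every $|k| \leq n - 1$ one has $|\mathrm{tr}(h_k)| \leq nX$. Applying the trace-length identity $4\cosh \ell(h_k) = |\mathrm{tr}(h_k)|^2 + |\mathrm{tr}(h_k)^2 - 4|$ together with $|\mathrm{tr}^2 - 4| \leq |\mathrm{tr}|^2 + 4$ and $e^{\ell} \leq 2\cosh \ell$ gives $e^{\ell(h_k)} \leq |\mathrm{tr}(h_k)|^2 + 8 \leq n^2 X^2 + 8$, hence $\ell(h_k) \leq \log(n^2 X^2 + 8)$ whenever $h_k$ is loxodromic.

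To finish, I would extract $n$ pairwise non-conjugate loxodromic elements from the family $\{h_k : |k| \leq n - 1\}$, which contains $2n - 1$ elements. The traces $\mathrm{tr}(h_k)$ form an arithmetic progression in $\mathbb{C}$ with nonzero common difference, so all are distinct; two $h_k$ are $\Gamma$-conjugate only if their traces agree up to sign, i.e., $k + k' = -2\,\mathrm{tr}(h)/(\omega_0 c)$, which cuts out at most one pair of indices. Similarly, $h_k$ fails to be loxodromic only when $\mathrm{tr}(h_k) = \pm 2$ or $\mathrm{tr}(h_k) \in \mathbb{R} \cap (-2, 2)$; since $|\omega_0| \geq 1$ (because horoballs of diameter $1$ do not overlap in the maximal cusp), the latter interval-constraint limits $k$ to a bounded number of values. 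Discarding these finitely many bad indices from the $2n - 1$ candidates leaves at least $n$ pairwise non-conjugate loxodromic elements with $|\mathrm{tr}| \leq nX$, producing $n$ distinct closed geodesics of length at most $\log(n^2 X^2 + 8)$. The principal obstacle will be this final distinctness and loxodromic bookkeeping, together with faithfully extracting the explicit trace bound $|\mathrm{tr}(h)| \leq X$ on the initial element $h$ from the more intricate horoball-packing argument underlying Theorem \ref{LG}.
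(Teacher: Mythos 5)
Your overall mechanism is the same one the paper uses: both arguments manufacture a family of loxodromic elements by multiplying a single ``depth-one'' element (one carrying $H_\infty$ to a full-sized horoball) by powers of a parabolic translation in $\Gamma_\infty$, observe that the traces form an arithmetic progression so that the trace modulus grows linearly in the power, and convert trace to length via Theorem \ref{th23}. This is exactly what Lemmas \ref{3.1} and \ref{th*} do with $\alpha_n=\beta^{\pm n}\gamma$. Your non-conjugacy and loxodromicity bookkeeping (distinct traces up to sign, finitely many bad indices among $2n-1$ candidates) is in fact more explicit than the paper's, which essentially asserts that the resulting geodesics are distinct.

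The genuine gap is quantitative and sits in the step where you ``extract $|\mathrm{tr}(h)|\le X$ from the proof of Theorem \ref{LG}.'' What that argument actually yields at its optimum is a trace bound of $\sqrt{X^2+4}$, not $X$: the systole bound $\log(X^2+8)$ arises from Theorem \ref{th23} as $\log(R^2+4)$ with $R=\sqrt{X^2+4}$. Feeding the correct seed bound into your telescoping gives $|\mathrm{tr}(h_k)|\le\sqrt{X^2+4}+(n-1)X$ for $|k|\le n-1$, and
\begin{equation*}
\Bigl(\sqrt{X^2+4}+(n-1)X\Bigr)^2+4-\bigl(n^2X^2+8\bigr)=2(n-1)X\Bigl(\sqrt{X^2+4}-X\Bigr)>0 \qquad (n\ge 2),
\end{equation*}
so your route proves a bound strictly weaker than \eqref{in1} for every $n\ge 2$ (weaker only by an additive $O(n)$ inside the logarithm, but the stated inequality is not recovered). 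The paper avoids this by carrying the index $n$ through the entire optimization: it derives the $n$-dependent trace bounds $\sqrt{(n-\tfrac12)^2l^2+V_c^2/l^2}$ and $\sqrt{l^4n^2+4}$ (Lemmas \ref{3.1} and \ref{th*}), minimizes over the admissible range of the parabolic length in Lemma \ref{3.3}, and only then balances the increasing function $F_n$ against the decreasing function $G_n$ coming from the Futer--Kalfagianni--Purcell volume inequality and Theorem \ref{Thu2}; the ``$+4$'' under the square root then appears once rather than being inflated by the translation step. To repair your argument you must either rerun the Lakeland--Leininger optimization with the factor $n$ built in (which is what the paper does) or accept a slightly larger constant. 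Your auxiliary claim $|\omega_0|\le X$ is fine as sketched via Minkowski and the case split on the size of $C_0V$.
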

\begin{rmrk}
It is important to note that the above upper bound \eqref{in1} for the length of a $n^{th}$ shortest closed geodesic in $M$ is the generalization of upper bound of  the systole length mentioned in Theorem \ref{LG}, in the sense that if we put $n=1$ in \eqref{in1}, we get back the bound \eqref{LGb}.
\end{rmrk}
The methodology of the proof is basically based on some basic geometric observation on the cusp diagram (horoball diagram) of the hyperbolic 3-manifold $M \setminus L$. We are basically producing countably infinitely many  geodesics in the cusp diagram, which are the lifts of the closed geodesics in the manifold $M \setminus L$. Then we find the length of that countably infinite collection of closed geodesics, and finally, we give the upper bounds length of the shortest closed geodesics in the manifolds. The vital ingredient of the new upper bound in (\ref{in1}) is that  new upper bounds are in terms of the volume of a compact hyperbolic 3-manifolds. 

We present some auxiliary results relevant to our discussion in the next section. Equipped these technicalities we prove the main result in the final section.

\section{Preliminaries}
 There are several models of hyperbolic $3$-space. Throughout this article, we will work on the upper half space model. In this model hyperbolic $3$ space is defined to be the the set of points in upper half space i.e. $\mathbb{H}^3=\{(z,t)\in \mathbb{C\times R}: t>0\}$ equipped with the Riemannian metric
\begin{equation}\label{E1}
	ds^2=\dfrac{|dz|^2+dt^2}{t^2}.
\end{equation}
Boundary of $\mathbb{ H }^3$ is defined by $\partial\mathbb{ H }^3=\{(z,t)\in \mathbb{C\times R}:t=0 \}\cup\{\infty\}$. With respect to above metric, the all geodesics in $\mathbb{H}^3=\{(z,t)\in \mathbb{C\times R}: t>0\}$ are vertical lines and semicircle perpendicular to the  boundary of $\mathbb{H}^3$. Complete geodesic planes are all vertical planes and hemispheres orthogonal to the boundary of $\mathbb{H}^3$.

Let us define $Iso^{+}(\mathbb{ H }^3)$ to be the set of all orientation preserving isometry	on $\mathbb{ H }^3$. If we take a M\"{o}bius transformation from $PSL(2,\mathbb{C})$ it can be ex-tented uniquely to a orientation preserving isometry of $\mathbb{ H }^3$. 

Suppose $	\begin{pmatrix}
	a & b \\
	c & d
\end{pmatrix} \in PSL(2,\mathbb{C})$, and the extension of 
$\begin{pmatrix}
	a & b \\
	c & d
\end{pmatrix}$ is $T: \mathbb{ H }^3 \rightarrow \mathbb{ H }^3$

\label{2.2}
\begin{equation}
 T(z,t) =
\begin{cases}
\displaystyle	\left(- \dfrac{\overline{z+d/c}}{c^2(|z+d/c|^2 + t^2)} + \dfrac{a}{c}, \dfrac{t}{c^2(|z+d/c|^2 + t^2)}\right)  & c \neq 0  \\
\left(\dfrac{a}{c}(z+b/a), \left|\dfrac{a}{d}\right|t\right) & c=0
\end{cases}  .
\end{equation}

Every orientation preserving M\"{o}bius transformation has a unique continuous extension from $\mathbb{ H }^3 \cup \partial\mathbb{ H }^3$ to $\mathbb{ H }^3 \cup \partial\mathbb{ H }^3$, whose restriction to $\mathbb{ H }^3$ is an isometry with respect to the above metric $ds^2$. Conversely,every isometry of $\mathbb{ H }^3$ with respect to the same metric we will get this way {\cite[Theorem 9.8 ]{FB}}.

So, without loss any generality we may assume that~ $Iso^{+}(\mathbb{ H }^3)=PSL(2,\mathbb{C})$ and $PSL(2,\mathbb{C})$ acts on the boundary of $\mathbb{ H }^3$ via M\"{o}bious transformations.

A $3$-manifold is said to be hyperbolic if it admits a complete Riemannian metric with $-1$ constant sectional curvature. A subgroup $\Gamma$ of $PSL(2,\mathbb{C})$ is said to be a {\it Kleinian group} if it is a   discrete subgroup of $PSL(2,\mathbb{C})$.

Here we are going to state an important theorem which relates the geometric and topological invariants of complete hyperbolic 3-manifolds.

\begin{thm}{\cite[Theorem 6.1 ]{JP}}(Mostow-Prasad rigidity).
If $M^n_1 $	and $M^n_2$ are complete hyperbolic $n$ manifold with finite volume and $n > 2$, then any isomorphism of fundamental groups $\phi :\pi_1(M^n_1)  \mapsto \pi_1(M^n_2)$ is realized by unique isometry.
\end{thm}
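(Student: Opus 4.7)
The plan is to follow the classical Mostow--Prasad strategy, which constructs an isometry by first producing a boundary map at infinity and then upgrading its regularity. I realize each manifold as a quotient $M^n_i = \mathbb{H}^n/\Gamma_i$, where $\Gamma_i \cong \pi_1(M^n_i)$ is a torsion-free lattice in $\mathrm{Isom}^+(\mathbb{H}^n)$ (a Kleinian group when $n=3$, as discussed above). The abstract isomorphism $\phi:\Gamma_1 \to \Gamma_2$ is the only data available, and the aim is to conjugate $\Gamma_1$ to $\Gamma_2$ by a single Möbius transformation of $\overline{\mathbb{H}^n}$.

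First I would build a $\phi$-equivariant pseudo-isometry $\tilde{F}:\mathbb{H}^n \to \mathbb{H}^n$. In the closed case, pick a basepoint $x_0$, set $\tilde{F}(\gamma x_0) = \phi(\gamma)x_0$ for $\gamma \in \Gamma_1$, and extend across a compact fundamental domain by a partition-of-unity interpolation; the lattice property guarantees uniform Lipschitz control. In the finite-volume cusped case I would first use the Margulis lemma to show that $\phi$ carries maximal parabolic (cusp) subgroups of $\Gamma_1$ to maximal parabolic subgroups of $\Gamma_2$ of the same rank, then truncate the manifolds along disjoint horoball cusp neighborhoods and perform the interpolation only on the thick part, matching horoballs to horoballs in a $\phi$-equivariant way.

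Next, standard negatively curved geometry (the Morse lemma for quasi-geodesics) shows that $\tilde{F}$ extends to a $\phi$-equivariant homeomorphism $\partial \tilde{F}: \partial \mathbb{H}^n \to \partial \mathbb{H}^n$ of the sphere at infinity, and that $\partial \tilde{F}$ is quasi-conformal with constant depending only on the pseudo-isometry constants. The main obstacle of the argument is the rigidity step: upgrading this boundary map from quasi-conformal to conformal. Here I would invoke Mostow's ergodicity theorem, asserting that the diagonal action of $\Gamma_1$ on $\partial \mathbb{H}^n \times \partial \mathbb{H}^n$ is ergodic with respect to the Lebesgue measure class (equivalently, the geodesic flow on the unit tangent bundle of $M_1^n$ is ergodic, which holds because $M_1^n$ has finite volume). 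Combining ergodicity with the Gehring--Väisälä theory of quasi-conformal maps in dimension $n-1 \geq 2$, and using the $\Gamma_1$-equivariance of the conformal distortion, one forces the distortion to be trivial almost everywhere. It is precisely at this point that the hypothesis $n > 2$ enters, since in dimension one every homeomorphism of $S^1$ is ``conformal'' in a trivial sense but need not be Möbius.

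Once $\partial \tilde{F}$ is known to be a Möbius transformation of $S^{n-1}$, it extends uniquely to a hyperbolic isometry $F: \mathbb{H}^n \to \mathbb{H}^n$; equivariance passes to $F \Gamma_1 F^{-1} = \Gamma_2$, so $F$ descends to an isometry $M^n_1 \to M^n_2$ realizing $\phi$. For uniqueness, suppose $F_1, F_2$ both realize $\phi$. Then $F_2 \circ F_1^{-1}$ is an isometry of $M_2^n$ inducing the identity on $\pi_1(M_2^n)$; lifting to $\mathbb{H}^n$ gives an isometry commuting with every element of $\Gamma_2$, which by the discreteness of $\Gamma_2$ and the triviality of the centralizer of a non-elementary lattice in $\mathrm{Isom}^+(\mathbb{H}^n)$ must be the identity, so $F_1 = F_2$.
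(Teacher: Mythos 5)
The paper contains no proof of this statement to compare against: Mostow--Prasad rigidity is quoted verbatim from Purcell's book \cite{JP} and used purely as background, to justify that the complete finite-volume hyperbolic metric (hence geodesic lengths and volume) is a topological invariant of $M\setminus L$. Judged on its own as a reconstruction of the classical argument, your outline is essentially correct and follows the standard Mostow--Prasad architecture: an equivariant pseudo-isometry, boundary extension via stability of quasi-geodesics, quasiconformality of the boundary homeomorphism, the ergodicity-plus-quasiconformal-regularity step killing the distortion, the Liouville/Gehring theorem turning a $1$-quasiconformal map of $S^{n-1}$ with $n-1\geq 2$ into a M\"obius transformation, and uniqueness via triviality of the centralizer. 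You also locate correctly where $n>2$ enters (quasisymmetric maps of $S^1$ are flexible, which is why rigidity fails for surfaces), and your uniqueness argument is sound once you add the standard justification: an isometry commuting with every element of $\Gamma_2$ fixes the limit set pointwise, and the limit set of a finite-covolume group is all of $\partial\mathbb{H}^n$.

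Two flags for a full write-up. First, in the cusped case the thick-part interpolation alone gives only a quasi-isometry of truncated spaces, which does not immediately yield a boundary extension on all of $\partial\mathbb{H}^n$; you must extend equivariantly into the horoballs (e.g.\ radially) after matching them, and the matching itself rests on the algebraic characterization of peripheral subgroups (maximal rank-$(n-1)$ free abelian subgroups of a finite-covolume torsion-free lattice are parabolic, since centralizers of loxodromic elements are elementary). This is precisely Prasad's contribution beyond Mostow and deserves to be spelled out rather than delegated to the Margulis lemma in one clause. Second, a minor attribution point: ergodicity of the geodesic flow for finite-volume hyperbolic manifolds is Hopf's theorem, from which the ergodicity of the $\Gamma$-action on $\partial\mathbb{H}^n\times\partial\mathbb{H}^n$ is derived; and each quoted ingredient (Morse stability, Gehring--V\"ais\"al\"a regularity, the generalized Liouville theorem) is itself a substantial theorem, so what you have is a correct roadmap rather than a self-contained proof --- an appropriate register here, given that the paper itself states the theorem without proof.
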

	
Now if $M$ is complete hyperbolic $3$ manifold with finite volume then by Mostow-Prasad rigidity theorem the Riemannian metric with constant $-1$ curvature is unique i.e.  geometric invariants are topological invariants. A subgroup $\Gamma$ of $PSL(2,\mathbb{C})$ is discrete if and only if its action on $\mathbb{ H }^3$ is properly discontinuous.
		
Any hyperbolic $3$ manifold obtained by the quotient $\mathbb{ H }^3/\Gamma$, where $\Gamma$ is torsion-free Kleinian group acting on $\partial \mathbb{ H }^3$ via M\"{o}bius transformations. Let $M= \mathbb{ H }^3/\Gamma$, be a hyperbolic $3$-manifold, then $\Gamma$ is isomorphic to $\pi_1(M)$. Let $M= \mathbb{ H }^3/\Gamma$ be $3$-manifold which is non compact and finite volume. Let $M$ consist finite number of cusps which is homeomorphic to $T^2 \times [0.\infty)$. Lifting of the cusp in $\mathbb{ H }^3$ is a countable collection of disjoint horoballs covering the cusp in $\mathbb{ H }^3$. Horoballs are equivalent under the action of $\Gamma$ on $\mathbb{ H }^3$.

A hyperbolic $3$-manifold containing cusps is called a cusp hyperbolic $3$-manifold. Let $M$ be a cusp hyperbolic $3$-manifold. Now we are thickening the cusp continuously until they touch themselves. Therefore we can not thicken it further; this is called the maximal cusp. So, A cusp is said to be a maximal if there is no larger cusp containing it which occurs exactly when the cusp is tangent to itself at one or more points. Lift of the maximal cusp of $M$  is the countably infinite set of horoball in $\mathbb{ H }^3$ with disjoint interiors and some point of tangency on their boundaries. Each horoball touches the boundary of the upper half space at a single point. These points are called the center of the horoballs. Projection of the interior of the hroballs to $M$ are tubular neighborhoods of the cusp in $M$. We choose the point at $\infty$ as the centre of horoball denoted $\mathbb{H}_{\infty}$ that covers the maximal cusp. We will normalize so that the boundary of $\mathbb{H}_{\infty}$  is the plane $t=1$ and the projection region $t>1$ tubular neighborhoods of the cusps in $M$. The horoballs whose center at $\partial\mathbb{ H }^3$  is tangent to the plane $t=1$, are called full-sized horoballs.

Now for a given cusp hyperbolic 3-manifold in the maximal cusp diagram, there must exist an isometry $\gamma$ in $\Gamma$ such that $\gamma(H_0)=H_{\infty}$. In $\partial \mathbb{ H }^3$ i.e. in $\mathbb{C}\cup\{\infty\}$, $\gamma(0)=\infty$. If we extended this $\gamma$ to the M\"{o}bious transformation in $\mathbb{ H }^3$ via the above formula (\ref{2.2}), we get that radius of the corresponding isometric circle of $\gamma$ is 1. So, the isometry must be of the form
\begin{align}
\gamma=\begin{pmatrix}
a & -\frac{1}{c} \\
c & 0
\end{pmatrix} \text{, where $|c|=1$} ~.
\end{align}

\subsection{Geodesic Spectrum}
Let $M$ be finite volume hyperbolic $3$ manifold. Then there exists a torsion-free Kleinian group $\Gamma$  such that $M$ is homeomorphic to $\mathbb{ H }^3/\Gamma$. Also $\Gamma$ is isomorphic to $\pi_1(M)$. As $\Gamma$ is discrete so $\pi_1(M)$ is countably infinite. Therefore, each free homotopy loop class in $\pi_1(M)$ contains a unique closed geodesic representative. Hence the number of closed geodesic is countable.
The lengths of closed geodesics in $M$ forms a discrete subset of $\mathbb{R}$. Let $L=\{l_1, l_2, \dots, l_n,\dots\}$ be the set of all lengths of closed  geodesics in $M$. As $L$ is discrete we can arrange the length spectrum by increasing order, like $l_1 \leq l_2 \leq,\dots\leq l_n,\dots$. Thus the systole length of $M$ is $l_1$.

Let $M$ be complete hyperbolic $3$ manifold with at least one cusp. Let $M=\mathbb{ H }^3/\Gamma$ where $\Gamma$ is discrete subgroup of $PSL(2,\mathbb{C})$ isomorphic to $\pi_1(N)$. We can conjugate $\Gamma$ such a way that the stabilizer of $\infty$ which we denote $\Gamma_{\infty}$ will be parabolic subgroup of $\Gamma$ which is isomorphic to $\mathbb{Z} \oplus \mathbb{Z}$. If $l$ is the minimal translation length we can always take one generator of
$\Gamma_{\infty}$ is

\begin{center}
	$
	\beta=\begin{pmatrix}
		1 & l \\
		0 & 1
	\end{pmatrix}. $
	
\end{center}

In this ar we will always assume that $0$ $\in \partial\mathbb{ H }^3$ and $\infty$   $\in \partial\mathbb{ H }^3$ are the parabolic fixed points of $\Gamma$.


\subsection{Dehn filling}	


Let $M$ be a $3$-manifold with torus boundary component $\partial M$ homeomorphic to torus $T^2$, therefore $\pi_1(\partial M)$ isomorphic to the group $\mathbb{Z} \oplus \mathbb{Z}$. Let $a$ and $b$ be two generators of $\pi_1(\partial M)$ i.e. $\pi_1(\partial M)= \langle a, b \rangle$, let $s$ be an isotopy class of essential simple closed curve in the boundary torus $\partial M$, so $s= a^m b^n$, $s$ also called slope on $\partial M$. Now let $P=S^1 \times D^2$ a solid torus and $\mu$ be the meridian of $\partial P$, we from a closed $3-$ manifold by $s$-\textbf{Dehn filling} denoted by $M(s)$ by attaching $P$ on $M$ identifying $\phi : \partial P \rightarrow \partial M$ such that $\phi(\mu)=s$. More precisely, $\displaystyle M(s)= {M \sqcup P} / {x \sim \phi(x)}$.
Suppose $M$ has $k$ disjoint torus boundary components, let $s_1, s_2, \dots, s_k$ are the slopes corresponding boundary components, Dehn fill along the slope $s_1, s_2,\dots,s_k$ denoted by $M(s_1, s_2,\dots,s_k)$.

Let $M$ be a cusp hyperbolic $3$ manifold with cusp $C$ then the boundary of $C$ admits a Euclidean Structure, $s$ be any essential simple closed curve in the boundary of $C$ then $s$ is isotopic to a geodesic with well defined Euclidean length such a length is called $slope~ length$.

Let $M$ be a cusp hyperbolic $3-$ manifold with cusps $C_1, C_2,\dots.,C_k$. It is not always possible that after Dehn Filling the manifold $M(s_1,s_2,\dots,s_k)$ will be closed hyperbolic 3-manifold, when it will hyperbolic depends on the slope lengths of $s_1, s_2,\dots,s_k$. The following theorem provides a sufficient condition when $M(s_1,s_2,\dots,s_k)$ is hyperbolic which is known as Gromov–Thurston $2\pi$-theorem, theorem stated as follows:

\begin{thm} {\cite[Theorem 2.1 ]{Adams2000}}($2\pi$-$Theorem$)
Let $M$ be a cusped hyperbolic 3-manifold with n cusps. Let $ C_1,C_2,\dots,C_k $be disjoint cusp tori for the $n$ cusps of $M$, and $s_i$ a slope on $C_i$ represented by a geodesic $\alpha_i$ whose length in the Euclidean metric on $C_i$ is greater than $2\pi$, for each $i \in \{1,2,\dots.,k\}$ Then $M(s_1,s_2,\dots,s_k)$ admits a metric of negative curvature.
\end{thm}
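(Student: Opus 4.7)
The plan is to construct a smooth Riemannian metric on $M(s_1,\ldots,s_k)$ of strictly negative sectional curvature by modifying the hyperbolic metric of $M$ only inside pairwise disjoint horospherical neighborhoods of the filled cusps. First, I would choose embedded horoball neighborhoods $U_i$ of each $C_i$, shrunk if necessary to be pairwise disjoint. In horospherical coordinates, $U_i \cong T^2 \times [0,\infty)$ carries the hyperbolic metric $d\tau^2 + e^{-2\tau} g_0$ with $g_0$ a flat metric on $T^2$ and $\tau=0$ the maximal horosphere; after aligning orthogonal coordinates on $T^2$ with the slope $s_i$ and its perpendicular, and rescaling so the slope direction has period $2\pi$, this metric takes the warped-product form
\[
d\tau^2 + \left(\tfrac{\ell_i}{2\pi}\right)^{2} e^{-2\tau}\, d\mu^2 + b_i^2\, e^{-2\tau}\, d\lambda^2,
\]
where $\mu$ is the meridional angle and $\lambda$ is arclength in the perpendicular direction. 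The hypothesis $\ell_i > 2\pi$ says precisely that the meridian warping factor exceeds $1$ at $\tau=0$.

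For each $i$, I would pick a small $\tau_0>0$, delete $T^2\times(\tau_0,\infty)$, and glue in a solid torus $V_i = D^2\times S^1$ whose meridian is identified with $s_i$. Equip $V_i$ with polar coordinates $(\rho,\mu,\lambda)$, where $\rho=0$ is the core geodesic and $\rho=\rho_0$ is the gluing boundary, and impose a warped-product metric
\[
g_i = d\rho^2 + f(\rho)^2\, d\mu^2 + h(\rho)^2\, d\lambda^2.
\]
Smoothness of $g_i$ at the core forces $f(0)=0$, $f'(0)=1$, $h'(0)=0$. For $C^\infty$-matching to the hyperbolic metric at $\rho=\rho_0$, observe that $\tau$ and $\rho$ continue in opposite directions across the gluing boundary, and that the exponential $F(\tau)=Ce^{-\tau}$ satisfies $F^{(k)}=(-1)^k F$; hence the matching conditions become $f(\rho_0)=C$ and $f^{(k)}(\rho_0)=C$ for every $k\ge 1$, where $C = \ell_i e^{-\tau_0}/(2\pi)$, and analogously for $h$. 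A standard warped-product computation then gives the principal sectional curvatures on $V_i$ as
\[
K(\partial_\rho,\partial_\mu) = -\frac{f''}{f}, \qquad K(\partial_\rho,\partial_\lambda) = -\frac{h''}{h}, \qquad K(\partial_\mu,\partial_\lambda) = -\frac{f'h'}{fh},
\]
so it is enough to construct $f,h$ with $f'',h''>0$ and $f',h'>0$ on $(0,\rho_0]$.

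The heart of the argument, and the step in which $\ell_i>2\pi$ enters, is the existence of such a strictly convex $f$. Since $f(0)=0$, $f'(0)=1$, and $f''>0$ force $f'>1$ and $f>\rho$ on $(0,\rho_0]$, the matching condition $f'(\rho_0)=f(\rho_0)=C$ is compatible with strict convexity only when $C>1$, i.e.\ only when $\ell_i>2\pi e^{\tau_0}$. Conversely, given $\ell_i>2\pi$, one may choose $\tau_0>0$ small enough that $C>1$, pick any $\rho_0\in(1,C)$, and build a strictly convex interpolant explicitly: for instance, take $f'(\rho)=1+(C-1)\phi(\rho/\rho_0)$ for a suitable smooth increasing cut-off $\phi$ that flattens at the right endpoint so that $f^{(k)}(\rho_0)=C$ for all $k\ge 1$. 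Building $h$ with $h(0)>0$, $h'(0)=0$, $h''>0$, and the analogous matching conditions is strictly easier since $h(0)$ is free. Gluing the resulting $V_i$ smoothly into $M\setminus\bigcup_i(T^2\times(\tau_0,\infty))$ produces a metric on $M(s_1,\ldots,s_k)$ that is hyperbolic outside the filled solid tori and strictly negatively curved inside. The main obstacle is precisely this convex-interpolation step: when $\ell_i\le 2\pi$, the required boundary value $f(\rho_0)\le 1$ is incompatible with the strict convexity forced by the core conditions $f(0)=0$, $f'(0)=1$.
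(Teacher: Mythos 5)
The paper does not actually prove this statement: it is quoted verbatim from Adams--Reid, who in turn attribute it to Gromov--Thurston, with the standard written-up proof being that of Bleiler and Hodgson. Your proposal reconstructs precisely that canonical argument --- a rotationally symmetric warped-product metric $d\rho^2+f(\rho)^2\,d\mu^2+h(\rho)^2\,d\lambda^2$ on the filling solid torus, the curvature formulas $-f''/f$, $-h''/h$, $-f'h'/(fh)$, all-orders matching with the exponentially decaying cusp metric at the gluing torus, and the observation that convexity together with $f(0)=0$, $f'(0)=1$ forces the boundary value $C=\ell_i e^{-\tau_0}/(2\pi)$ to exceed $1$, which is exactly where $\ell_i>2\pi$ enters. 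So in outline this is the standard proof rather than a new route, and the skeleton is sound.

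There is, however, one genuinely broken step: your explicit interpolant contradicts your own matching conditions. If $f'(\rho)=1+(C-1)\phi(\rho/\rho_0)$ with $\phi$ flattening to all orders at the right endpoint, then $f^{(k)}(\rho_0)=0$ for every $k\ge 2$, not $C$; the glued metric is then only $C^1$ across the seam, and worse, $f''\equiv 0$ on a left-neighborhood of $\rho_0$ gives $K(\partial_\rho,\partial_\mu)=-f''/f=0$ there, so the resulting metric is not negatively curved. The correct device is the opposite one: make $f$ \emph{identically equal} to the exponential $Ce^{\rho-\rho_0}$ on a left-neighborhood of $\rho_0$ (all derivative matching is then automatic and $f''=f>0$ there), and interpolate convexly in the interior, e.g.\ by smoothing the concatenation of a line through the origin with a tangent line of the exponential; the tangent line of $Ce^{\rho-\rho_0}$ through the origin has slope $Ce^{1-\rho_0}$, so this works for $\rho_0\in(1,\,1+\log C)$, an interval that is nonempty precisely because $C>1$. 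Two smaller omissions: smoothness at the core needs more than $f(0)=0$, $f'(0)=1$, $h'(0)=0$ --- one needs the parity conditions ($f$ odd, $h$ even in $\rho$, e.g.\ $f=\kappa^{-1}\sinh(\kappa\rho)$ near $0$, which also keeps the sectional curvatures strictly negative in the limit $\rho\to 0$); and the flat cusp torus is generally not rectangular relative to the slope, so the second lattice generator acts with a shear --- you must note that the model metric is invariant under the screw motions (rotation in $\mu$ composed with translation in $\lambda$), which is what makes the quotient metric on $D^2\times S^1$ well defined for an arbitrary gluing lattice. With those repairs your argument is the complete standard proof.
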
    

This slope length bound in $2\pi$ theorem was later improved from $2\pi$ to $6$ by Ian Agol\cite{Agol} and Marck Lacknby \cite{lackenby1998word} independently.

\vspace{.2In}

In {\cite[Theorem 6.5.6]{Thu2}}, William Thurston proved the following theorem.
\begin{thm} \label{Thu2}
	If M is hyperbolic 3-manifold with cusp $C$, and $s$ is a slope on $\partial C$ such that $M(s)$ is hyperbolic, then
	
\begin{align*}
\textup{Vol(M)} > \textup{Vol(M(s))}.
\end{align*}	
	
\end{thm}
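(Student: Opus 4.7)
The plan is to use Gromov's simplicial volume $\|\cdot\|$. For an oriented finite-volume hyperbolic $3$-manifold $X$, Gromov's theorem gives $\textup{Vol}(X) = v_{0}\,\|X\|_{lf}$, where $v_{0}$ is the volume of a regular ideal tetrahedron in $\mathbb{H}^{3}$ and $\|\cdot\|_{lf}$ denotes the locally finite simplicial volume (which agrees with the ordinary simplicial volume when $X$ is closed). Consequently, the desired inequality $\textup{Vol}(M) > \textup{Vol}(M(s))$ is equivalent to the strict inequality $\|M(s)\| < \|M\|_{lf}$, and it suffices to establish this.

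For the non-strict bound $\|M(s)\| \leq \|M\|_{lf}$, I would exploit the homeomorphism $M \cong M(s)\setminus\gamma$, where $\gamma\subset M(s)$ is the geodesic core of the Dehn filling solid torus $V$. Given a locally finite fundamental cycle $c$ of $M$ with $\|c\|$ close to $\|M\|_{lf}$, its simplices can be viewed as simplices in $M(s)\setminus\gamma\subset M(s)$. Truncating $c$ outside a horocusp neighborhood of $C$ leaves a relative cycle of $(\overline{M},\partial\overline{M})$, where $\overline{M}$ is the compact core. Because $\pi_{1}(V)\cong \mathbb{Z}$ is amenable, Gromov's amenable-filling lemma allows one to complete this relative cycle to a closed cycle in $M(s)$ by inserting a \emph{cap} inside $V$ of arbitrarily small simplicial norm. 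Pushing the truncation deeper into the cusp and taking infima then yields $\|M(s)\| \leq \|M\|_{lf}$.

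For the strict inequality I would argue by contradiction using a rigidity phenomenon, which I anticipate will be the hardest step. Suppose $\|M(s)\| = \|M\|_{lf}$; then $\textup{Vol}(M(s)) = \textup{Vol}(M)$. Take a sequence of efficient fundamental cycles in $M(s)$ whose norms approach $\|M(s)\|$, and replace each singular $3$-simplex by its straightening: the unique geodesic $3$-simplex in $\mathbb{H}^{3}$ with the same lifted vertices. By Milnor's inequality the hyperbolic volume of any such straight simplex is at most $v_{0}$, with equality exactly when the simplex is a regular ideal tetrahedron. Equality in the combined simplicial-volume sum would therefore force almost all straight simplices to approach regular ideal tetrahedra, whose four vertices necessarily lie on $\partial \mathbb{H}^{3}$. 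Such simplices would project to simplices with vertices at cusps of the quotient --- impossible in the closed manifold $M(s)$. Mostow--Prasad rigidity, already recorded in the excerpt, guarantees that the straightening procedure is canonical, sealing the contradiction. The main obstacle is making this rigidity step quantitative: the weak bound is routine once the amenable-filling lemma is invoked, but extracting strictness from an approximation argument on straightened efficient cycles is delicate and constitutes the core of Thurston's original proof.
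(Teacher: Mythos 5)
The paper does not actually prove this statement: it is quoted verbatim from Thurston's notes \cite[Theorem 6.5.6]{Thu2} and used later only to justify $x > V$. So you are reconstructing Thurston's argument rather than matching a proof in the paper. Your overall route is indeed the classical one: Gromov's equality $\textup{Vol}(X)=v_0\|X\|$ (with the locally finite/relative version for the cusped manifold), plus the non-strict inequality $\|M(s)\|\leq \|M\|_{lf}$ obtained by truncating an efficient fundamental cycle at the cusp and capping inside the filling solid torus using amenability of $\pi_1$ of the torus and solid torus. That half of your sketch is essentially sound, modulo the standard technical work (Gromov's equivalence theorem identifying $\|M\|_{lf}$ with the relative norm of the compact core, and vanishing of the relative norm of the solid torus).

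The strictness step, however, contains a genuine gap: the contradiction you propose is vacuous. For \emph{every} closed hyperbolic $3$-manifold $N$, the equality $\textup{Vol}(N)=v_0\|N\|$ --- which you yourself invoke --- already forces any sequence of fundamental cycles with norms tending to $\|N\|$ to straighten to simplices whose volumes tend to $v_0$, i.e.\ whose shapes approach the regular ideal tetrahedron. This happens with the four lifted vertices simply moving pairwise far apart \emph{inside} $\mathbb{H}^3$; they never need to lie on $\partial\mathbb{H}^3$, and enormous nearly-ideal straight simplices project to perfectly legitimate singular simplices in a closed manifold. So the property you derive from the hypothesis $\|M(s)\|=\|M\|_{lf}$ holds unconditionally and yields no contradiction; if your argument were valid it would equally ``prove'' $\|N\|>\textup{Vol}(N)/v_0$ for every closed hyperbolic $N$, contradicting Gromov's theorem. (The appeal to Mostow--Prasad rigidity is also a non sequitur here: straightening is canonical because a geodesic simplex in $\mathbb{H}^3$ is determined by its vertices, not because of rigidity.) Extracting strict inequality genuinely requires a further idea, e.g.\ Thurston's finer analysis comparing efficiency of the same chains with respect to the two distinct hyperbolic metrics on $M(s)\setminus\gamma$ and $M(s)$, or entirely different machinery such as Schl\"afli-type monotonicity along cone deformations or the barycenter method of Besson--Courtois--Gallot in its finite-volume form; none of this is supplied by your sketch, which at present proves only $\textup{Vol}(M(s))\leq\textup{Vol}(M)$.
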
 

In the final section, we will use the above theorem while explaining our main result(\ref{Main Rresult}). 

\vspace*{.2in}

Let $M$ be  hyperbolic $3$- manifold with finitely many cusps torus, then the lower bound  of the volume of hyperbolic Dehn filled manifolds is given by following theorem of D.Futer et al.\cite{FDKE}.

\begin{thm}{\cite[Theorem 1.1 ]{FDKE}} \label{Futer}
	
Let $M$ be a complete, finite volume hyperbolic manifold with cusps. Suppose $C_1,C_2,\ldots, C_n$ are disjoint embedded cusps with slopes$ s_j $ on $C_j$ such that a geodesic representative of $s_j$ on $\partial C_j$ has length strictly greater than $2\pi$. Denote the minimal slope length $l_{min}$ then the Dehn filled manifold $M(s_1, s_2,\dots,s_n)$ is hyperbolic and with 
	
\begin{align*}
\textup{Vol} \left(M(s_1, s_2,\dots,s_n)\right) \geq \left( 1-\left(\dfrac{2\pi}{l_{min}}\right)^2 \right)^{\frac{3}{2}}  \textup{Vol}(M).
\end{align*}

\end{thm}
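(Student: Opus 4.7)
The plan is to prove the volume inequality via Hodgson–Kerckhoff's theory of hyperbolic cone-manifold deformations, which provides sharp control on how hyperbolic volume changes as the cone angle along a filled link is varied continuously.

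First, by the $2\pi$-theorem already stated in the excerpt, the slope length hypothesis ensures that $N := M(s_1,\ldots,s_n)$ admits a metric of negative curvature, and hence by geometrization $N$ is hyperbolic. For $\alpha \in [0, 2\pi]$, I would consider the one-parameter family of hyperbolic cone manifolds $N_\alpha$ obtained from $M$ by cone-filling along each newly introduced core curve with cone angle $\alpha$, so that $N_0 = M$ (equipped with its complete cusped hyperbolic metric) and $N_{2\pi} = N$ (equipped with its smooth hyperbolic metric). The $2\pi$-lower-bound on slope lengths is precisely what is needed to invoke the Hodgson–Kerckhoff local rigidity and deformation theorems, guaranteeing that $\{N_\alpha\}$ is well-defined and smoothly parametrized by $\alpha$ throughout the whole interval, and that a sufficiently large embedded tube around each singular locus persists.

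Next, I would apply the Schl\"{a}fli volume formula for hyperbolic cone manifolds,
\begin{equation*}
\dfrac{d}{d\alpha}\textup{Vol}(N_\alpha) = -\dfrac{1}{2}\sum_{j=1}^n \ell_j(\alpha),
\end{equation*}
where $\ell_j(\alpha)$ is the length of the $j$-th singular geodesic at cone angle $\alpha$. The Hodgson–Kerckhoff tube-radius and length estimates then bound $\ell_j(\alpha)$ uniformly in terms of the initial slope length $l_{min}$. Integrating the Schl\"{a}fli formula from $\alpha = 0$ to $\alpha = 2\pi$ and substituting these bounds yields
\begin{equation*}
\textup{Vol}(M) - \textup{Vol}(N) \;\leq\; \left[1 - \left(1 - \left(\dfrac{2\pi}{l_{min}}\right)^{\!2}\right)^{\!3/2}\right]\textup{Vol}(M),
\end{equation*}
which rearranges to the stated inequality. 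The exponent $3/2$ and the specific factor $1 - (2\pi/l_{min})^2$ emerge from the explicit form of the tube-radius estimates in dimension $3$.

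The main obstacle is not the Schl\"{a}fli integration itself, but rather establishing the uniform tube-radius and length estimates $\ell_j(\alpha)$ throughout the full cone deformation; these constitute the deep analytic content of Hodgson–Kerckhoff's work and rely on a careful study of the $L^2$-harmonic forms representing infinitesimal deformations of the cone structure. Invoking their theorems as a black box is the cleanest route here, after which the volume bound follows from a direct calculation.
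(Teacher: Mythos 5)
A preliminary remark: the paper never proves Theorem \ref{Futer} at all --- it is imported verbatim, with citation, from Futer--Kalfagianni--Purcell \cite{FDKE} and used as a black box. So your proposal can only be measured against the proof in \cite{FDKE}, and there it diverges in a way that matters: your central step fails under the stated hypothesis. You assert that ``the $2\pi$-lower-bound on slope lengths is precisely what is needed to invoke the Hodgson--Kerckhoff local rigidity and deformation theorems.'' It is not. The Hodgson--Kerckhoff theorems guaranteeing that the family of cone manifolds $N_\alpha$ exists for all $\alpha\in[0,2\pi]$ with an embedded tube of controlled radius require a lower bound on the \emph{normalized} length of the filling slopes (Euclidean length divided by the square root of the area of the cusp cross-section), quantitatively of the order of $7.5$. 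A hypothesis of length $>2\pi$ gives no lower bound on normalized length whatsoever: on a cusp torus of large area, every slope can have length $>2\pi$ while all normalized lengths are tiny. Consequently the deformation $\{N_\alpha\}$ may a priori degenerate before $\alpha=2\pi$, the uniform bounds on the core lengths $\ell_j(\alpha)$ that you feed into the Schl\"afli formula (whose form $\frac{d}{d\alpha}\textup{Vol}(N_\alpha)=-\frac{1}{2}\sum_j \ell_j(\alpha)$ is itself correct) are unavailable, and your closing claim that the factor $\left(1-\left(2\pi/l_{min}\right)^2\right)^{3/2}$ ``emerges from the explicit form of the tube-radius estimates'' is unsubstantiated --- the cone-deformation volume estimates that do exist in the literature carry normalized-length hypotheses and produce different constants.

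The proof in \cite{FDKE} avoids cone manifolds entirely. Hyperbolicity of $M(s_1,\dots,s_n)$ is obtained, as in your first step, from the $2\pi$-theorem together with geometrization; for the volume inequality, Futer--Kalfagianni--Purcell sharpen the Bleiler--Hodgson construction underlying the $2\pi$-theorem to produce an explicit negatively curved Riemannian metric on the relevant manifold, with quantitative control of sectional curvature and volume in terms of $l_{min}$, and then invoke the volume-comparison theorem of Besson--Courtois--Gallot, in the finite-volume formulation of Boland--Connell--Souto, to compare against the hyperbolic metric (which minimizes volume among metrics whose Ricci curvature is bounded below by $-2$, the hyperbolic value, in dimension $3$). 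In that argument the exponent $3/2$ is simply the cube of a linear metric rescaling by $\left(1-\left(2\pi/l_{min}\right)^2\right)^{1/2}$; it has nothing to do with tube radii. If you insist on the cone-deformation route, you must strengthen the hypothesis from slope length $>2\pi$ to a normalized-length bound in the Hodgson--Kerckhoff range, and you will then prove a genuinely different (and for the purposes of this paper, weaker) statement than Theorem \ref{Futer}.
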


We finished this section by providing  two theorems by Lakeland and Leininger \cite{Christopher}. Later we will use this theorems to prove in our main result,
theorems stated as follows:

\begin{thm}{\cite[Lemma 3.1 ]{Christopher}} \label{th23}
	A loxodromic element of $PSL(2, \mathbb{C})$ with trace of modulus bounded above by $R$ has translation length at most $log(R^2 + 4)$.	
\end{thm}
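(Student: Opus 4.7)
The plan is to reduce to the diagonal form of a loxodromic element and then do a direct algebraic estimate. First, since $\gamma \in PSL(2,\mathbb{C})$ is loxodromic, it is conjugate in $PSL(2,\mathbb{C})$ to a diagonal matrix $\begin{pmatrix} \lambda & 0 \\ 0 & \lambda^{-1} \end{pmatrix}$ with $|\lambda| > 1$ (well-defined up to replacing $\lambda$ by $\lambda^{-1}$ and up to global sign). Both the trace modulus $|\mathrm{tr}(\gamma)| = |\lambda + \lambda^{-1}|$ and the translation length are conjugation invariants, so we may work in this normal form.

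Next I would identify the translation length. The diagonal matrix acts on $\mathbb{H}^3$ via the isometry that extends $z \mapsto \lambda^2 z$; its axis is the vertical geodesic above $0$, and it sends $(0,t)$ to $(0, |\lambda|^2 t)$. Hence the hyperbolic translation length along the axis is
\begin{equation*}
\ell(\gamma) \;=\; \log |\lambda|^2 \;=\; 2\log |\lambda|.
\end{equation*}
So it suffices to show $|\lambda|^2 \le R^2 + 4$ under the hypothesis $|\lambda + \lambda^{-1}| \le R$.

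For this bound I would apply the reverse triangle inequality: since $|\lambda| \ge 1$,
\begin{equation*}
|\lambda| - |\lambda|^{-1} \;\le\; |\lambda + \lambda^{-1}| \;\le\; R .
\end{equation*}
Setting $x = |\lambda| \ge 1$, this reads $x^2 - Rx - 1 \le 0$, so by the quadratic formula
\begin{equation*}
x \;\le\; \frac{R + \sqrt{R^2 + 4}}{2}.
\end{equation*}
Squaring and simplifying gives $x^2 \le \tfrac{1}{2}\bigl(R^2 + 2 + R\sqrt{R^2+4}\bigr)$.

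The remaining step, which is the only place any real work is needed, is to verify that this last expression is bounded by $R^2 + 4$, equivalently $R\sqrt{R^2+4} \le R^2 + 6$. Squaring both (nonnegative) sides reduces the check to $R^4 + 4R^2 \le R^4 + 12R^2 + 36$, which is obvious. Combining yields $|\lambda|^2 \le R^2 + 4$ and hence $\ell(\gamma) = \log |\lambda|^2 \le \log(R^2 + 4)$. The only conceptual point is recognizing that diagonalization turns the problem into a one-variable inequality in $|\lambda|$; I do not anticipate any genuine obstacle beyond the elementary algebra in the final step.
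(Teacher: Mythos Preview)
Your argument is correct. Note, however, that the paper does not supply its own proof of this statement: it is quoted as a preliminary fact from Lakeland--Leininger \cite{Christopher} and used without further justification. Your diagonalization-and-eigenvalue computation is exactly the standard route (and is essentially how Lakeland--Leininger themselves argue), so there is nothing to compare beyond observing that you have filled in the details the present paper omits.
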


\begin{thm}{\cite[Lemma 3.3 ]{Christopher}}\label{th3}
	Suppose the Kleinian group $\Gamma$ is torsion-free,
	and $\Gamma_{\infty}$ has co-area at most $V_c$ when viewed as acting by translation in $\Gamma_{\infty}$ on $\mathbb{C}$~(or equivalently acting on $H_{\infty}$), and that the minimal parabolic translation length $l>2\pi$, Then $2\pi < l \leq \displaystyle \sqrt{\dfrac{4V_c}{\sqrt3}}$.	
\end{thm}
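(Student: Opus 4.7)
The plan is to translate the statement into a purely Euclidean fact about the cusp cross-section torus. Since $\Gamma$ is torsion-free and $\Gamma_{\infty}$ is the stabilizer of $\infty$, this rank-two parabolic subgroup acts on the horosphere $H_{\infty}=\{t=1\}$ by Euclidean translations of $\mathbb{C}$. It therefore corresponds to a rank-two lattice $\Lambda\subset\mathbb{C}$ whose covolume equals the co-area $V_c$, and the minimal parabolic translation length $l$ is precisely the length of a shortest non-zero vector of $\Lambda$. The hypothesis $l>2\pi$ is not used in the inequality itself; it is recorded in the conclusion because subsequent applications (Dehn filling, the $2\pi$-theorem) require it.

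First I would choose a Gauss-reduced basis $\{\alpha,\beta\}$ of $\Lambda$: pick $\alpha\in\Lambda$ of minimal length $l$, then pick $\beta$ of minimal length among vectors independent of $\alpha$. After rotating we may take $\alpha=l\in\mathbb{R}_{>0}$, and after replacing $\beta$ by $\beta+k\alpha$ for a suitable $k\in\mathbb{Z}$ we may take $\beta=a+ih$ with $|a|\leq l/2$ and $h>0$. With this normalization the covolume factors as $V_c=l\cdot h$, so the whole task is to bound $h$ from below in terms of $l$.

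The key step is to use minimality of $l$: since $\beta\in\Lambda\setminus\{0\}$, one has $|\beta|\geq l$, i.e.\ $a^2+h^2\geq l^2$. Combined with $|a|\leq l/2$, this forces $h^2\geq\tfrac{3}{4}l^2$, so $h\geq\tfrac{\sqrt{3}}{2}\,l$. Substituting back yields $V_c = l h \geq \tfrac{\sqrt{3}}{2}\,l^2$, hence
\begin{equation*}
l \;\leq\; \sqrt{\tfrac{2V_c}{\sqrt{3}}} \;<\; \sqrt{\tfrac{4V_c}{\sqrt{3}}},
\end{equation*}
which is strictly stronger than what the theorem asserts. Carrying the hypothesis $l>2\pi$ through gives the stated double inequality $2\pi < l \leq \sqrt{4V_c/\sqrt{3}}$.

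The only potential obstacle is the Gauss reduction step, but this is classical for plane lattices, and the inequality $|\beta|\geq l$ is immediate from the definition of $l$. If one prefers to avoid basis reduction entirely, Minkowski's lattice-point theorem applied to the open disc of radius $l$ around the origin -- which by minimality contains no non-zero lattice point -- gives $\pi l^2\leq 4V_c$, hence $l\leq 2\sqrt{V_c/\pi}$; since $2/\sqrt{\pi}<\sqrt{4/\sqrt{3}}$, this also implies the bound. Either route keeps the argument entirely inside elementary Euclidean lattice geometry, with no hyperbolic input beyond the identification $\Gamma_{\infty}\leftrightarrow\Lambda$.
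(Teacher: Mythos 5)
Your proof is correct, and in fact establishes something slightly stronger than the statement asserts. One caveat on the comparison: the paper offers no proof of this statement at all --- it is imported verbatim from \cite{Christopher} as Lemma 3.3 --- so the only benchmark is the original argument there, and yours is essentially that argument: pass to the lattice $\Lambda\subset\mathbb{C}$ of translations, use a reduced basis to show that a plane lattice with shortest vector of length $l$ has covolume at least $\frac{\sqrt{3}}{2}l^2$, and invert. Your Gauss-reduction step is sound ($|a|\le l/2$ together with $|\beta|\ge l$ forces $h\ge\frac{\sqrt{3}}{2}l$), and the Minkowski alternative is also valid, since $\frac{2}{\sqrt{\pi}}<\sqrt{\frac{4}{\sqrt{3}}}$. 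The point worth flagging is the factor of $2$: under the hypothesis literally as stated (co-area at most $V_c$) you correctly obtain the sharper bound $l\le\sqrt{2V_c/\sqrt{3}}$. The constant $\sqrt{4V_c/\sqrt{3}}$ in the statement reflects the convention in \cite{Christopher}, where $V_c$ bounds the maximal cusp \emph{volume}, so that the co-area is at most $2V_c$ (consistently, the present paper's proof of Lemma \ref{3.1} remarks that cusp area $V_c$ corresponds to cusp volume $\frac{V_c}{2}$); with co-area $\le 2V_c$ your computation reproduces $\sqrt{4V_c/\sqrt{3}}$ exactly, so the transcribed statement is simply non-sharp rather than wrong, and your proof of it is valid. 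Two micro-corrections: write covolume $=lh\le V_c$ rather than $V_c=lh$, since the hypothesis is an upper bound, not an equality; and your observation that $l>2\pi$ is not used in deriving the upper bound (it is carried along for the later Dehn-filling applications) is accurate.
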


Now we have enough information to proof our main theorem. So, in the next section we are going to proof our main result.

\section{Proof of the Main Result}
Before going to the main result, we will prove some lemmas. Using these lemmas, we will prove our main result. The following lemmas are proved using some basic geometric observations of horoball diagram of cusp hyperbolic 3-manifolds.

\begin{lma}\label{3.1}
Let $\Gamma$ be torsion-free Kleinian group with a parabolic subgroup $\Gamma_{\infty}$ which fixes the point at $\infty$ and isomorphic to $\mathbb{Z}\oplus\mathbb{Z}$. We can view $\Gamma_{\infty}$ acting on $H_{\infty}$ such that $H_{\infty}/\Gamma_{\infty}$ is homeomorphic to $T^2$, and let the minimal parabolic translation length $l > 2\pi$. Let $\Gamma_{\infty}$ has co-area at most $V_c$ when we view $\Gamma_{\infty}$ acting on $ H _{\infty}$.  Also let $\Gamma$ contains an elements of the from 
\begin{align}
\gamma=\begin{pmatrix}
a & -\frac{1}{c} \\
c & 0
\end{pmatrix} 
\text{, where $|c|=1$, and $|a|$ is the minimal for all $\gamma \in \Gamma$ with $\gamma(0)={\infty}$},
\end{align}
then one of the flowing are true :

\begin{enumerate}
	
	\item if $|a| > 2$, then for each $n \geq 1$ there exist a loxodromic element $\alpha_n$ such that
	 \begin{align}
	 |tr(\alpha_n)| \leq \sqrt{{\left(n-\frac{1}{2}\right)^2} l^2 + \frac{V_c^2}{l^2}}
	 \end{align}
	\item if $|a| \leq 2$, then for each $n \geq 1$ there exist a loxodromic element $\alpha_n$ such that 
	
	\begin{align}
	|tr(\alpha_n)| \leq \sqrt{l^2n^2 +4}
	\end{align}
	
\end{enumerate}
\end{lma}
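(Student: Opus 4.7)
The plan is to realize each $\alpha_n$ as a word $\beta^{j}(\beta')^{k}\gamma$, where $\beta,\beta'$ are generators of the parabolic lattice $\Gamma_{\infty}$ with translation vectors $l$ and $\omega$. Normalize so that $\omega = x + iy$ with $y > 0$; the co-area hypothesis then reads $V_c = l y$. Write $a/c = u + iv$. A direct matrix computation gives
\[
\beta^{j}(\beta')^{k}\gamma = \begin{pmatrix} a + (jl+k\omega)c & -1/c \\ c & 0 \end{pmatrix},
\]
whose trace is $a + (jl+k\omega)c$. Since $|c|=1$, this yields the key identity
\[
|tr(\beta^{j}(\beta')^{k}\gamma)|^{2} = (u + jl + kx)^{2} + (v + ky)^{2},
\]
which reduces the whole problem to a closest-lattice-point search on $l\mathbb{Z}+\omega\mathbb{Z}$ with target $-a/c$.

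For Case 2 ($|a|\le 2$), I would set $k = 0$ and take $\alpha_n := \beta^{\epsilon n}\gamma$, where $\epsilon \in \{\pm 1\}$ is chosen so that $\epsilon \operatorname{Re}(a\bar{c}) \le 0$. The cross term in the trace expansion is then non-positive and
\[
|tr(\alpha_n)|^{2} = |a|^{2} + 2\epsilon n l \operatorname{Re}(a\bar{c}) + n^{2}l^{2} \le |a|^{2} + n^{2}l^{2} \le 4 + n^{2}l^{2}.
\]
Different $n$ give visibly different matrices, so the $\alpha_n$ are pairwise distinct elements of $\Gamma$.

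For Case 1 ($|a|>2$), the element $\gamma$ itself has $|tr|>2$, so I must shift it using \emph{both} parabolic generators. Choose $k^{\ast}\in\mathbb{Z}$ to be a nearest integer to $-v/y$, so $|v + k^{\ast}y| \le y/2$, and set $A := u + k^{\ast}x$. With $k = k^{\ast}$ held fixed, the squared trace equals $(jl+A)^{2} + (v + k^{\ast}y)^{2}$, and the real parts $\{jl+A\}_{j \in \mathbb{Z}}$ form an arithmetic progression of step $l$. After shifting $j$ so that $A \in [-l/2,l/2]$, enumerating $\{|jl+A|\}$ in increasing order (first $|A| \le l/2$, then the pair $|A\pm l| \le 3l/2$, then $|A\pm 2l| \le 5l/2$, and so on) shows that the $n$-th smallest value is at most $(\lfloor n/2\rfloor + 1/2)l \le (n-1/2)l$. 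Letting $j_n$ realize this and setting $\alpha_n := \beta^{j_n}(\beta')^{k^{\ast}}\gamma$ gives
\[
|tr(\alpha_n)|^{2} \le (n-1/2)^{2}l^{2} + (y/2)^{2} \le (n-1/2)^{2}l^{2} + V_c^{2}/l^{2},
\]
using $y = V_c/l$.

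The only real subtlety I foresee is verifying loxodromicity. Because $\Gamma$ is torsion-free, each non-identity $\alpha_n$ is either loxodromic or parabolic, and parabolicity would require $tr(\alpha_n) \in \{\pm 2\} \subset \mathbb{R}$, a codimension-one accident that can be sidestepped by perturbing $(j_n,k^{\ast})$ to a neighboring lattice index without damaging the stated trace bound. Specializing $n=1$ recovers the Lakeland--Leininger systole trace estimates of Theorems \ref{th23}--\ref{th3}, giving an internal consistency check on the lattice-point bookkeeping; the conceptual content of the proof is precisely the observation that Case 1 demands a two-parameter deformation (to kill the imaginary coordinate via $k^{\ast}$ before optimizing in $j$), whereas Case 2 is handled by the one-parameter family $\beta^{\epsilon n}\gamma$.
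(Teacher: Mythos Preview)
Your overall strategy matches the paper's in Case~2 (and your sign choice $\epsilon$ is in fact cleaner than the paper's figure-based bound), but in Case~1 you take a genuinely different route: the paper never introduces the second generator $\beta'$, instead invoking the minimality of $|a|$ to place $a/c$ in the Dirichlet domain of $\Gamma_\infty$ at $0$, after which only the one-parameter family $\beta^{\pm n}\gamma$ is needed. Your two-generator lattice search is a reasonable alternative, and your enumeration bounding the $n$-th smallest $|jl+A|$ by $(n-\tfrac12)l$ is correct.

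The genuine gap is loxodromicity in Case~1. Your claim that a parabolic $\alpha_n$ ``can be sidestepped by perturbing $(j_n,k^\ast)$ \dots without damaging the stated trace bound'' fails for $n=1$: the best $j$ gives real part at most $l/2$, but the next-best already has real part up to $l$, which violates the target $l^2/4 + V_c^2/l^2$; perturbing $k^\ast$ instead replaces $(y/2)^2$ by $(3y/2)^2$, again too large. The fix is precisely the hypothesis you never used: every $\beta^j(\beta')^k\gamma$ still sends $0$ to $\infty$ (parabolics fix $\infty$), so by minimality of $|a|$ its trace has modulus at least $|a|>2$ and is therefore automatically loxodromic---no perturbation is needed. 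For the same reason you should record in Case~2 that $|tr(\alpha_n)| \ge nl - |a| > 2\pi - 2 > 2$; distinctness of the matrices alone does not give loxodromicity.
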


\begin{proof}
Before we go to the main proof $ 1^{st}$, we look at some observations. As given in hypothesis $V_c$ is the maximal cusp area then it implies that the cusp volume is $\frac{V_c}{2}$. We recall the {\it Dirichlet domain} of the group group of isometries $\Gamma$ at the point $p_0 \in \mathbb{ H }^3$ (here $\Gamma$ acting on $\mathbb{ H }^3$), is defined by $\Delta_{\Gamma}(p_0)=\{p \in \mathbb{ H }^3: d_{hyp}(p,p_0)\leq d_{hyp}(p,\gamma(p_0)) \forall \gamma \in \Gamma\}$. Let $\Delta$ be the {\it Dirichlet domain} of $\Gamma_{\infty}$ at the point $0$ by hypothesis $|a|$ is the minimal trace modules all $\gamma \in \Gamma$, all $\beta \in \Gamma_{\infty}$, $\beta\gamma(0)=\infty$, so $\frac{a}{c} \in \Delta$.
\vspace*{.3cm}

\textbf{Proof~of~(1)}:
Given that $l$ is the minimal translation length corresponding one  parabolic basis in $\Gamma_{\infty}$. We can present this parabolic isometry in $\Gamma$ by the matrix 
\begin{align}
\beta=\begin{pmatrix}
1 & l \\
0 & 1
\end{pmatrix}.
\end{align}
and 
\begin{align}
\beta^{\pm n}\gamma=\begin{pmatrix}
a \pm ncl & -\frac{1}{c} \\
c & 0
\end{pmatrix} \text{, where $|c|=1$}
\end{align}
Now we claim that $\beta^{-n} \gamma$ and $\beta^n \gamma$ both can not be parabolic together, if there exist a natural number $k$ such that $\beta^{k} \gamma$ and $\beta^{-k} \gamma$ then $tr(\beta^{-k}\gamma)=a-ncl$ is $2$ or $-2$ and $tr(\beta^k \gamma)=a+ncl$ is $2$ or $-2$.

\begin{align*}
\text{Now if},~ 
& a-kcl=2\\
& a+kcl=2\\
&\implies kcl=0,\text{which is a contradiction}.\\
\text{Now if},~ 
& a-kcl=-2\\
& a+kcl=-2\\
&\implies kcl=0, \text{which is also a contadiction}.\\
\text{Now if},~ 
& a-kcl=2\\
& a+kcl=-2\\
&\implies a=0, \text{which is a contradiction because $\Gamma$ does not contain any elliptic element}.\\
\end{align*}
Therefore, as a consequence we get that for each $n \geq 1$ one of the $\beta^{-n} \gamma$ or $\beta^n \gamma$ must be loxodromic. So, we can construct a sequence of loxodromic $\alpha_n$ by choosing  one between $\beta^{n} \gamma$ and $\beta^{-n} \gamma$ for all $n \geq 1$. Therefore we can produce a sequence of loxodromic elements whose trace modules either $|a-ncl|$ or $|a+ncl|$. 

By the above discussion we can construct a sequence of distinct loxodromic element $\alpha_n$ such that $\alpha_n^{-1}(0)=0$ and $\alpha_n(\infty)= \frac{a}{c} + ncl$ or $a-ncl$, now trace modules  of $\alpha_n$ is equal to the distance between the center of isometric circle of $\alpha_n$ and $\alpha_n^{-1}$. Let $\Delta$ be the {\it Dirichlet domain} for $\Gamma$ at $0$ and  $\gamma(\infty)= \frac{a}{c}$ is inside the {\it Dirichlet domain}. Now for each $n$ the center of the isometric circles of $\alpha_n^{-1}$ is $\frac{a}{c}+nl$ or $\frac{a}{c}-nl$ and center of the isometric circle of $\alpha_n$ is $0$. So, for each $n$ the maximum distance of the center between  isometric circles $\alpha_n^{-1}$ and $\alpha_n$ given by the coordinate $-(n-\frac{1}{2}){l}+i\frac{1}{2}\frac{2V_c}{l}$  or $(n-\frac{1}{2}){l}+i\frac{1}{2}\frac{2V_c}{l}$.
\vspace*{1.3cm}

\begin{figure}[htbp!]
	\centering
	\includegraphics[width=0.9\textwidth]{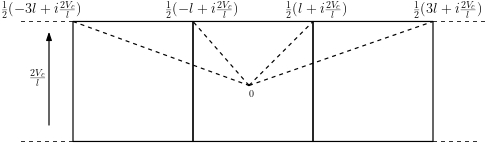}
	\caption{The doted line are position vectors of center of isometric circles and modules of this position vectors bound the trace modules of loxodromic elements $\alpha_n$. }
\end{figure}


Therefor, we get
\begin{align}
|tr(\alpha_n)| \leq \sqrt{{\left(n-\frac{1}{2}\right)^2} l^2 + \frac{V_c^2}{l^2}}
\end{align} 

 \textbf{Proof~of~(2)}:To prove this 2nd part  we consider the sequence of distinct elements  $\beta^{n}\gamma$, for all $n \geq 1$ where the product
 \begin{align}
 \beta^{n}\gamma=\begin{pmatrix}
 a + ncl & -\frac{1}{c} \\
 c & 0
 \end{pmatrix} \text{, where $|c|=1$}.
 \end{align}
Now, $|tr(\beta^{\pm n}\gamma)|=|a\pm ncl|\geq |nl|-|a|>2$ (as $l > 2\pi$ and $|a|\leq 2$), therefore  $\beta^{n}\gamma$ are loxodromic all $n \geq 1$.
(here we note that also one can take $\beta^{-n}\gamma$ all $n \geq 1$)
 

Set $\alpha_n=\beta^n\gamma$,  the center of isometric circles of $\alpha_n$ and $\alpha_n^{-1}$
are receptively $\alpha_n^{-1}(\infty)=0$ and  $\alpha_n(\infty)=\frac{a}{c}+nl$. In this case we are shifting the center center of isometric circles by $nl$ distance. Now trace of $\alpha_n$ is the distance between the points $\alpha_n^{-1}(\infty)=0$ and $\alpha_n(\infty)=\frac{a}{c}+nl$.  
As $|\frac{a}{c}|\leq 2$, by applying Pythagoras theorem we can bound the trace modules bound of $\alpha_n$ for each $n \geq 1$.

\begin{figure}[htbp!]
	\centering
	\includegraphics[width=0.8\textwidth]{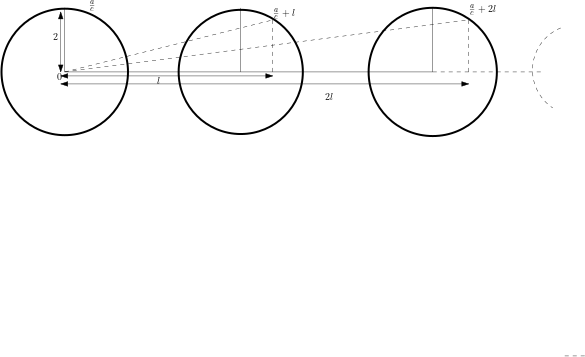}
	\caption{Trace modules of $\beta^n\gamma$ less than $\sqrt{l^2n^2+4}$. }
\end{figure}

 Therefore, $|tr(\beta^{n}\gamma)|\leq \sqrt{l^2n^2+4}$.

\end{proof}

\begin{lma}\label{th*}
	Let $M$ be a finite volume hyperbolic 3 manifold with at least one cusp and let $M=\mathbb{H}^3/\Gamma$ where $\Gamma$ discrete subgroup of $PSL(2,\mathbb{C})$. Assume that in a maximal cusp torus, there is non trivial curve corresponding to a parabolic isometry of length $l> 2\pi$. Then each $n \geq 1$ we can bound trace modules  of a loxodromic elements by $\displaystyle\sqrt{l^4n^2 +4}$.
\end{lma}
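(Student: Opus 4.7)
The plan is to derive Lemma \ref{th*} as a direct consequence of Lemma \ref{3.1}, by collapsing its two case-bounds into a single estimate that involves only $l$ and $n$. After conjugating the Kleinian group $\Gamma$ so that the cusp under study sits at $\infty$ with $H_\infty = \{t > 1\}$, the hypothesis of the lemma supplies a parabolic generator $\beta \in \Gamma_\infty$ of minimal translation length $l > 2\pi$, and the maximal cusp assumption produces a full-sized horoball $H_0$ tangent to $H_\infty$ together with an element $\gamma \in \Gamma$ carrying $H_0$ to $H_\infty$. Choosing $\gamma$ in its normalized form so that its lower-left entry $c$ has $|c| = 1$ and its upper-left entry $a$ has minimal modulus recovers exactly the setup of Lemma \ref{3.1}.

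Applying Lemma \ref{3.1} then produces, for each $n \geq 1$, a loxodromic element $\alpha_n$ with a case-dependent trace bound. In the case $|a| \leq 2$, the bound $|tr(\alpha_n)| \leq \sqrt{n^2 l^2 + 4}$ is immediately dominated by $\sqrt{n^2 l^4 + 4}$, because $l > 2\pi$ forces $l^2 \leq l^4$. In the case $|a| > 2$, the bound is $|tr(\alpha_n)| \leq \sqrt{(n - \tfrac{1}{2})^2 l^2 + V_c^2/l^2}$, and I must argue that this too is dominated by $\sqrt{n^2 l^4 + 4}$. The leading term $(n - \tfrac{1}{2})^2 l^2$ is bounded by $n^2 l^2 \leq n^2 l^4$, so the remaining task is to show $V_c^2/l^2 \leq 4$, equivalently $V_c \leq 2l$.

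This last estimate is the main obstacle. It is an upper bound on the cusp co-area purely in terms of $l$, and does not follow from Lemma \ref{th3}, which provides only the opposite inequality $l^2 \leq 4V_c/\sqrt{3}$. My strategy is to exploit the maximal cusp hypothesis together with $l > 2\pi$, invoking the B\"{o}r\"{o}czky horoball packing constraint: a long minimal parabolic translation forces the Dirichlet fundamental parallelogram of $\Gamma_\infty$ to be narrow transverse to $\beta$, so that $V_c/l$ remains controlled by a small universal constant. Once the bound $V_c \leq 2l$ is in hand, the two cases combine to give the unified trace bound $\sqrt{l^4 n^2 + 4}$ claimed by the lemma.
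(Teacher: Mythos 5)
Your route is genuinely different from the paper's, which does not deduce Lemma \ref{th*} from Lemma \ref{3.1} at all. In the paper the loxodromic elements are built as products $(\beta^{-1})^{n}\gamma$ of \emph{two parabolics}: $\beta$ is the minimal translation in $\Gamma_{\infty}$, and $\gamma=\begin{pmatrix}1 & 0\\ \omega & 1\end{pmatrix}$ is the lift of the same meridian curve to the boundary of the full-sized horoball $H_0$, so that $|\omega|=l$. The product has trace $2+nl\omega$, and in the perpendicular case $\omega=il$ its modulus is exactly $\sqrt{n^2l^4+4}$. That is where the fourth power of $l$ comes from; it cannot be manufactured from the $l^2$ terms appearing in Lemma \ref{3.1}, whose $\gamma$ is a different kind of element (one sending $0$ to $\infty$).

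The concrete gap in your argument is the inequality $V_c\leq 2l$ needed to absorb case (1) of Lemma \ref{3.1}. This is not merely unproved --- it is always false under your own hypotheses. The packing input you invoke (Lemma \ref{th3}, i.e.\ B\"or\"oczky) gives a \emph{lower} bound $V_c\geq \tfrac{\sqrt{3}}{4}l^2$, and combined with $l>2\pi$ this forces
\begin{equation*}
V_c \;\geq\; \frac{\sqrt{3}}{4}\,l^2 \;>\; \frac{\sqrt{3}\,\pi}{2}\,l \;>\; 2l ,
\end{equation*}
so $V_c^2/l^2>4$ without exception. A long minimal translation makes the cusp torus \emph{large}, not narrow: the co-area of $\Gamma_\infty$ grows at least quadratically in $l$, and there is no upper bound on $V_c$ in terms of $l$ alone in any case. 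Consequently case (1) of Lemma \ref{3.1} cannot be dominated by $\sqrt{n^2l^4+4}$ by your method, and the proposed reduction collapses. (Your treatment of the case $|a|\leq 2$ is fine, but on its own it does not prove the lemma.)
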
	

\begin{proof}
	Let $M=\mathbb{ H }^3/\Gamma$, $\Gamma$ is the image of faithful representation of $ \pi_1(M,b)$ where $b$ is base point at the point of tangency of maximal cusp in $M$ and  $\Gamma$ is  a discrete subgroup of $PSL(2,\mathbb{C})$.Let $H_0$ and $H_{\infty}$ are  horoballs whose center at $(0,0,0)$ and infinity. We arrange maximal cusp so that point of tangency between the horoballs $H_0$ and $H_{\infty}$ at $(0,0,1)$. The inverse image of $b$ under covering projection contain the point $(0,0,1)$. Let $x_1$ be the lift of meridian curve $c$ (base at $b$) to the horosphere $H_{\infty}$ with beginning point at $(0,0,1)$ and final end point at $(l,0,1)$. 
	
	\begin{figure}[htbp!]
		\centering
		\includegraphics[width=0.5\textwidth]{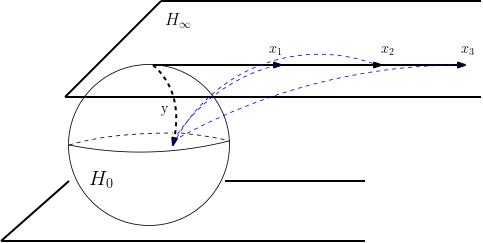}
		\caption{$H_0$ horoball at 0, $H_{\infty}$ horoball at $\infty$ and blue doted lines are geodesic passing through the end points of the lifts $x_n$ and $y$. }
	\end{figure}
There is another lift $y$ of $c$ on the  boundary of the full size  horoball $H_0$. In this lift $y$ having same starting point $(0,0,1)$ and end point some $p'$ on the boundary of $H_0$. Let $\beta$ and $\gamma$ be the corresponding parabolic isometries in $\Gamma$  of the paths $x_1$ and $y$. Now we can conjugate $\Gamma$ such that  $0$ and $\infty$ parabolic fixed points of $\beta$ and $\gamma$. So, we can take the isometries $\beta$ and $\gamma$ as follows :
\begin{align}
\beta^{-1} & =
\begin{pmatrix}
1 & l\\
0 & 1 
\end{pmatrix} 
\text{,where l is the minimal parabolic length} \\
&\text{and}\\
&\gamma  =
\begin{pmatrix}
1 & 0\\
\omega & 1 
\end{pmatrix}
\text{,where $\omega$ is a complex number}
\end{align}	
Now we consider the products for $n \geq 1$ 
\begin{center}
	$d_{\pm n} =$
	$
	({\beta^{-1}})^{\pm n} \gamma =
	\begin{pmatrix}
	1\pm nl\omega &  nl \\
	\omega & 1
	\end{pmatrix} $.		
\end{center} 	
 Then, for every $n \geq 1$, $d_{-n}$ and $d_n$, both can never parabolic together. So, for each $n$ one of the $d_{-n}$ or $d_n$ must be loxodromic. Therefore for each $n$, we can always get a loxodromic element whose trace modules are either $|2+ nl\omega|$ or $|2- nl\omega|$.
 
 Now let $x_n$ be the lift corresponding to the parabolic elements $\beta ^n$  If the angle between $x_n$ and $y$ is $\dfrac{\pi}{2}$ and if the product $\displaystyle ({\beta^{-1}})^n \gamma$ is loxodromic then the geodesic lengths corresponding the products is maximum. As we give an upper bound on shortest $n^{th}$ closed geodesics, without loss of any generality we can assume that the absolute value of the angle between $x_n$ and $y$ at $(0,0,1)$ is $\pi/2$.  So, we can choose the isometry $\beta^{-1}\gamma$ when $x$ and $y$ perpendicular we can represent the parabolic isometries $\beta$ and $\gamma$ by
	
\begin{center}
		$\beta^{-1} =
		\begin{pmatrix}
			1 & l\\
			0 & 1 
		\end{pmatrix}$
		and $\gamma=				
		\begin{pmatrix}
			1 & 0\\
			il & 1 
		\end{pmatrix}$
	\end{center}

	Now for each $n \in \mathbb{N}$,~ $\displaystyle ({\beta^{-1}})^n \gamma$ are distinct  and all are loxodromic as $\displaystyle|tr(\beta^{-1})^n \gamma)| >2$. Now
		
	\begin{center}
		$\alpha_n =$
		$
		({\beta^{-1}})^n \gamma =
		\begin{pmatrix}
			1+inl^2 &  nl \\
			il & 1
		\end{pmatrix} $. 
	\end{center} 
Therefore $\displaystyle |tr(\alpha _n)| = 
	\displaystyle \sqrt{l^4n^2 +4}$, hence for each $n \geq 1$ we can give an upper bound of trace modules  of loxodromic elements by $\displaystyle\sqrt{l^4n^2 +4}$.	
\end{proof}
Here we note that closed geodesic corresponding the loxodromic elements $({\beta^{-1}})^n \gamma$ all are distinct so we can rank the length spectrum of the corresponding closed geodesic by $l_1 \leq l_2 \leq l_3\dots\dots$.	Where $l_1$ is the systole length, $l_2$ is the $2^{nd}$ shortest closed  geodesic length,  $l_3$ is the $3^{rd}$ shortest closed  geodesic length $\dots\dots$ ~.


%
%
%
%
Next lemma is a generalization of lemma {\cite[Lemma 3.3 ]{Christopher}}. Techniques of the proof of this  lemma has been taken from \cite{Christopher}.
\begin{lma} \label{3.3}
Let $N=\mathbb{H}^3/\Gamma$ be a non compact hyperbolic $3$ manifold with finite volume where $\Gamma$ is a discrete subgroup of $PSL(2,\mathbb{C})$ and $N=M\setminus L$ where $M$ is a closed orient-able $3$ manifold and $L$ is hyperbolic link in $M$ and $\textup{Vol}(N)=V$. Then for each $n\geq1$, $\Gamma$ has loxodromic element with trace modules less than or equal to

\begin{align*}
\sqrt{2n^2V_c^{\frac{4}{3}} +4},\text{ where $V_c=C_0V$.} 
\end{align*}

\begin{proof}
	Let us consider the sequence of  functions $S_n(l)=min{T_n(l), AR_n(l)}$,
	
	\vspace*{.299cm}
	where 
\begin{align}
&T_n(l)=\sqrt{{\left(n-\frac{1}{2}\right)^2} l^2 + \frac{V_c^2}{l^2}}, \text{ and}	\\
&  AR_n(l)=\sqrt{n^2l^4+4} ~\text{where}, 2\pi < l \leq \sqrt{\frac{4V_c}{\sqrt 3}}	
\end{align}	
to make the proof easy we choose the sequence of functions
\begin{align}
&s_n(x)=min\{a_n(x),t_n(x)\},
\end{align}
where
\begin{align*}
& a_n(x)={n^2x^2+4},~ \\
& t_n(x)= {{\left(n-\frac{1}{2}\right)^2} x + \frac{V_c^2}{x}}
\end{align*}

  and, as $4\pi^2 > 1$, without loss any generality we can assume that $1 < x \leq {\frac{4V_c}{\sqrt 3}}$.
  
 Let $$\displaystyle f_n(x)=a_n(x)-t_n(x)=\dfrac{n^2x^3+4x-(n-\frac{1}{2})^2x^2-V_c^2}{x},~ \text{where}~ 1 < x \leq {\frac{4V_c}{\sqrt 3}}.$$
 
 Let us assume that
\begin{align}
 \displaystyle g_n(x)= n^2x^3+4x-(1-\frac{1}{2})^2x^2-V_c^2,
\end{align}
 the derivative 
\begin{align}
 g'_n(x)=3n^2x^2 -2(n-\frac{1}{2})^2x +4,
\end{align}
and $g'_n(x) > 0$ each $n \geq 1$. For each $n \geq 1$ therefore $g_n$ is an strictly incising function and also $g_n(0)<0$, so $g_n$ has unique a real root call it $x_n$. As leading coefficient of each $g_n$ is positive this implies that $x_n >0$. 
  
Therefore, for every $n \geq 1$, and $x > x_n$,
\begin{align*}
~~ g_n(x) >0 
 \implies a_n(x) > t_n(x),\quad \forall\, n.
\end{align*}
Now, we claim that for $n\geq1$, $x_n < \sqrt{2}V^{\frac{2}{3}}_c$.

Let us consider 
\begin{align*}
&a_n(\sqrt{2}V^{\frac{2}{3}}_c)-t_n(\sqrt{2}V^{\frac{2}{3}}_c)\\
&=2n^2V^{\frac{4}{3}}_c-(n-\frac{1}{2})^2\sqrt{2}V^{\frac{2}{3}}_c-\frac{1}{\sqrt 2}V^{\frac{4}{3}}_c +4\\
&=(2n^2-\frac{1}{\sqrt{2}})V^{\frac{4}{3}}_c-(n-\frac{1}{2})^2\sqrt{2}V^{\frac{2}{3}}_c+4 \\
& \geq V^{\frac{2}{3}}_c\left(\left(2n^2-\frac{1}{\sqrt 2}\right)-\sqrt{2}n^2+\sqrt{2}n-\frac{\sqrt{2}}{4}\right)+4 \geq 0
~~(\text{ as $V_c > 1$}).
\end{align*}
 This flows the claim. Therefore $a_n(x_n)<a_n(\sqrt{2}V^{\frac{2}{3}}_c)$.
 Again, there are two possibilities either $x_1 \leq 1$ or $x_1 > 1$, also one can easily  verify that $x_n \leq x_1$ all $n \geq 1$. Now if $x_1 \leq 1$ then, $a_n(x)>t_n(x)$ for all $x\in(1,\frac{4V_c}{\sqrt{3}}]$.
 
 Therefore,
 \begin{align*}
& s_n(x)=t_n(x), ~\text{for all}~x\in \left(1,\frac{4V_c}{\sqrt{3}}\right] ~\text{and for all}      ~ n\geq 1.\\
\implies &s_n(x) \leq a_n(\sqrt{2}V^{\frac{2}{3}}_c)~\text{for all}~x\in\left(1,\frac{4V_c}{\sqrt{3}}\right] ~\text{and for all}      ~ n\geq 1
& (\text{ as }~\sqrt{2}V^{\frac{2}{3}}_c > 1)
\end{align*}
 For worst case we may assume that $x_n \in \left(1,\frac{4V_c}{\sqrt{3}}\right] $ for all $n \geq 1$
\begin{align*}
&s_n(x)=a_n(x), ~\text{for all}~x\in(1,x_n] ~\text{and for all}      ~ n\geq 1.\\
&s_n(x)=t_n(x), ~ \text{for all}~x\in\left(x_n,\frac{4V_c}{\sqrt{3}}\right] ~\text{and for all}      ~ n\geq 1.
\end{align*}

Now for each $n \geq 1 \text{ and for all}~x\in(1,x_n] $,
\begin{align}
s_n(x)=a_n(x) \leq a_n(x_n)  \leq a_n(\sqrt{2}V_c^{\frac{2}{3}}) ~ (\text {As $x_n< \sqrt{2} V_c^{\frac{2}{3}}$})
\end{align}

For each $n\geq1$ $t_n$ is a convex function. The minimum of $t_n$ for each $n$ at the point $\dfrac{2V_c}{2n-1}$. As $n$ increasing the value of the $\dfrac{2V_c}{2n-1}$ is decreases. So, there are two possibilities either the minimum value of $t_n$ belongs to $\left(x_n,\frac{4V_c}{\sqrt{3}}\right]$ or not in the interval $\left(x_n,\frac{4V_c}{\sqrt{3}}\right]$. Both of this cases 

\begin{align*}
&t_n(x) \leq~ max \left \{t_n(x_n),t_n\left(\frac{4V_c}{\sqrt{3}}\right) \right   \} ~ \text{all} ~n \geq 1.
\end{align*}
 Now one can easily verify that $t_n(x) \leq a_n(\sqrt{2}V_c^{\frac{2}{3}})$ all $n \geq 1$ in $\left(x_n,\frac{4V_c}{\sqrt{3}}\right]$.
\end{proof}
\end{lma}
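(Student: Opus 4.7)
The plan is to combine the trace bounds from Lemma \ref{3.1} and Lemma \ref{th*} and then optimize over the admissible range of the minimal parabolic translation length $l$. By Theorem \ref{th3} we have $2\pi < l \leq \sqrt{4V_c/\sqrt{3}}$; for each such $l$, Case~(1) of Lemma \ref{3.1} supplies a loxodromic element with trace modulus at most $T_n(l) := \sqrt{(n-\tfrac{1}{2})^2 l^2 + V_c^2/l^2}$, while Lemma \ref{th*} supplies one with trace modulus at most $AR_n(l) := \sqrt{n^2 l^4 + 4}$. Since in every configuration at least one of these two bounds is actually realized, the relevant quantity is $S_n(l) := \min\{T_n(l), AR_n(l)\}$, and the lemma reduces to the purely analytic inequality $S_n(l) \leq \sqrt{2 n^2 V_c^{4/3} + 4}$ for all admissible $l$.

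It is convenient to substitute $x = l^2$, which ranges over $(4\pi^2, 4V_c/\sqrt{3}]$, and compare the squared bounds $a_n(x) := n^2 x^2 + 4 = AR_n(l)^2$ and $t_n(x) := (n-\tfrac{1}{2})^2 x + V_c^2/x = T_n(l)^2$. The sign of $a_n - t_n$ matches that of the cubic $g_n(x) = n^2 x^3 - (n-\tfrac{1}{2})^2 x^2 + 4 x - V_c^2$. A short calculation should show that the quadratic $g_n'$ has negative discriminant for every $n \geq 1$, so that $g_n$ is strictly increasing on $(0,\infty)$; combined with $g_n(0) = -V_c^2 < 0$, this produces a unique positive root $x_n$, and hence $s_n := \min\{a_n, t_n\}$ equals $a_n$ on $(0, x_n]$ and $t_n$ on $[x_n, \infty)$.

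The quantitative core of the argument is the estimate $x_n \leq \sqrt{2}\, V_c^{2/3}$. The natural route is to evaluate $g_n$ at $\sqrt{2}\, V_c^{2/3}$, expand $(n - \tfrac{1}{2})^2 = n^2 - n + \tfrac{1}{4}$, collect terms by powers of $V_c$, and invoke $V_c > 1$ (which follows from $l > 2\pi$ together with $l \leq \sqrt{4V_c/\sqrt{3}}$) to conclude $g_n(\sqrt{2}\, V_c^{2/3}) \geq 0$. Granted this claim, on $(0, x_n]$ monotonicity of $a_n$ yields $s_n(x) = a_n(x) \leq a_n(\sqrt{2}\, V_c^{2/3}) = 2 n^2 V_c^{4/3} + 4$.

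On the complementary interval $[x_n, 4V_c/\sqrt{3}]$ I would exploit the convexity of $t_n$: its unique minimum is at $x = 2V_c/(2n-1)$, so its maximum over the subinterval is attained at one of the endpoints. At $x_n$ we have $t_n(x_n) = a_n(x_n)$, already bounded; at $x = 4V_c/\sqrt{3}$ a direct substitution gives a quantity that is only linear in $V_c$, hence is dominated by $2 n^2 V_c^{4/3} + 4$. Taking square roots then produces the stated trace bound. The main obstacle I anticipate is the uniform-in-$n$ verification of $g_n(\sqrt{2}\, V_c^{2/3}) \geq 0$, since both the polynomial coefficients and the evaluation point carry joint $n$- and $V_c$-dependence that must be tracked without accidentally losing positivity for the smallest admissible values of $V_c$; once that estimate is in hand, the rest is routine monotonicity and convexity.
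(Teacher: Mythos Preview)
Your proposal follows the paper's proof essentially step for step: the same substitution $x=l^{2}$, the same cubic $g_n$, the same pivotal estimate $x_n \le \sqrt{2}\,V_c^{2/3}$ obtained by evaluating $a_n-t_n$ there, and the same monotonicity/convexity split of the interval at $x_n$. One small caveat: the discriminant of $g_n'$ is actually \emph{positive} for $n\ge 5$, so your planned ``short calculation'' will not go through as stated; however, both real roots of $g_n'$ then lie below $\tfrac{2(n-1/2)^2}{3n^2}<1$, so $g_n$ is still strictly increasing on the relevant range $x>4\pi^2$ (the paper simply asserts $g_n'>0$ without further comment), and the rest of your argument is unaffected.
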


\begin{rmrk}
Also, we have an upper bound of loxodromic elements from the case of the lemma (\ref{3.1}), which is $r_n(l)=\sqrt{l^2n^2+4}$ for each $n \geq 1$. This is an increasing function of $l$  for each $n \geq 1$ and  $2\pi < l \leq \displaystyle \sqrt{\dfrac{4V_c}{\sqrt3}}$. So, $r_n(l) \leq \sqrt{\frac{4V_c}{\sqrt{3}}n^2+4}$ for each $n \geq 1$ and by the above lemma (\ref{3.3}) we also have a trace modules bound of $n^{th}$ loxodromic elements for each $n \geq 1$.  Now,
\begin{align*}
\sqrt{\frac{4V_c}{\sqrt{3}}n^2+4} \leq \sqrt{2n^2V_c^{\frac{4}{3}} +4} \iff V_c \geq \dfrac{8}{3\sqrt{3}}
\end{align*}
and already $V_c \geq {\pi^2 \sqrt{3}}$. Therefore the bound already we have from the lemma \ref{3.3} is the best bound.
\end{rmrk}

\vspace*{.6cm}
Now set $M\setminus L=\mathbb{ H }^3/\Gamma_{L}$ and $x~=~\textup{Vol} (M\setminus L)$, for the different hyperbolic link $L$ in $M$ we get different $x$. Therefore, the above expression from \ref{3.3}, which is trace modules of loxodromic elements in terms of volume of $M\setminus L$, will produce a sequence of functions of $x$.  Hence the sequence of functions is $\displaystyle F_n(x)=\sqrt{{2n^2C^{4/3}_  0x^{4/3}}+4}$, which is increasing for each $n\geq 1$ and $2\pi < x \leq \sqrt{\frac{4V_c}{\sqrt 3}}$.

To get another loxodromic trace modules optimization sequence function from the following theorem by D. Futer et al.(\ref{Futer}).

\begin{thm}
Let $M\setminus L$, be a complete, finite volume, non-compact hyperbolic manifold. Let $M$ denote the closed manifold obtained by filling along specified slopes on each of boundary tori, each of which length at least $2\pi$, and the least of which is denoted  by $l_{min}$. Then $M$ is hyperbolic, and 
	\begin{center}
		$\textup{Vol}(M)\geq\left(1-\left(\frac{2\pi}{l_{min}}\right)^2\right)\textup{Vol}(M\setminus L)$.
	\end{center}
\end{thm}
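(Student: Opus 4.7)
The statement is (with the exponent $3/2$ tacitly understood, as in Theorem~\ref{Futer}) the Futer--Kalfagianni--Purcell volume bound for Dehn filling. My plan is to reconstruct their proof in two stages: first place a comparison Riemannian metric $\bar g$ on the closed manifold $M$ whose sectional curvatures are uniformly bounded above by $-a^2$, with $a^2 := 1-(2\pi/l_{min})^2$, and whose total volume is at least $\mathrm{Vol}(M\setminus L)$; then apply Gromov's straightening theorem to convert the curvature pinch into a comparison with $\mathrm{Vol}(M)$.

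For the construction of $\bar g$: fix disjoint embedded horoball neighborhoods $C_1,\dots,C_n$ in $M\setminus L$ whose boundary tori have shortest slope equal to the filling slope $s_i$ of length $l_i \geq l_{min} > 2\pi$. Topologically, $M$ is obtained from $(M\setminus L)\setminus\mathrm{int}(\bigsqcup C_i)$ by attaching solid tori $V_i$ along the $\partial C_i$. Equip each $V_i$ with a warped-product metric
\[
    g_{V_i} \;=\; dr^2 + f_i(r)^2\,d\mu^2 + h_i(r)^2\,d\lambda^2,\qquad r\in[0,R_i],
\]
where $(r,\mu,\lambda)$ are cylindrical coordinates around the core geodesic, $\mu$ is the meridian angle, and $\lambda$ the longitudinal coordinate. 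Following the Bleiler--Hodgson implementation of the Gromov--Thurston $2\pi$-theorem, choose $f_i,h_i$ so that (i) $g_{V_i}$ extends smoothly across the core, (ii) it matches the hyperbolic cusp metric along $\partial C_i$, and (iii) every sectional curvature is bounded above by $-a^2$; condition (iii) is achievable precisely because $l_i>2\pi$. Gluing then produces a smooth metric $\bar g$ on $M$ with $K_{\bar g}\leq -a^2$ globally.

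The key volume input is that the warping profile can be arranged so that $\mathrm{Vol}(V_i,g_{V_i})\geq \mathrm{Vol}(C_i,g_{\mathrm{hyp}})$ for every $i$, whence $\mathrm{Vol}(M,\bar g)\geq \mathrm{Vol}(M\setminus L)$. Now rescale to $\tilde g := a^2\bar g$, so that $K_{\tilde g}\leq -1$, and apply Gromov's straightening lemma to the fundamental class of $M$: the volume of any geodesic $3$-simplex in a space with $K\leq -1$ is bounded by the regular ideal tetrahedron volume $v_3$, so
\[
    \mathrm{Vol}(M,\tilde g) \;\leq\; v_3\,\|M\|_{\Delta} \;=\; \mathrm{Vol}(M,g_{\mathrm{hyp}}),
\]
the equality being the Gromov--Thurston norm formula for the closed hyperbolic $3$-manifold $M$ (hyperbolic by the $2\pi$-theorem together with geometrization). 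Since $\mathrm{Vol}(M,\tilde g) = a^3\,\mathrm{Vol}(M,\bar g)$, chaining the inequalities gives
\[
    \mathrm{Vol}(M\setminus L) \;\leq\; \mathrm{Vol}(M,\bar g) \;\leq\; a^{-3}\,\mathrm{Vol}(M),
\]
whence $\mathrm{Vol}(M)\geq a^3\,\mathrm{Vol}(M\setminus L)=(1-(2\pi/l_{min})^2)^{3/2}\,\mathrm{Vol}(M\setminus L)$, which implies the stated inequality.

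The main obstacle is the simultaneous achievement of (iii) and $\mathrm{Vol}(V_i)\geq\mathrm{Vol}(C_i)$: the curvature pinch $K\leq -a^2$ pushes $f_i,h_i$ to collapse quickly toward the core, which shrinks $\mathrm{Vol}(V_i)$, while the volume inequality demands slow decay. The quantitative threshold $l_i>2\pi$ is exactly the regime in which both can be accommodated, and producing explicit admissible warping profiles, verifying the curvature inequality at every point (including the gluing region near $\partial V_i$), and checking the volume comparison, is the technically delicate heart of the argument.
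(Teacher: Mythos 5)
You should know at the outset that the paper contains no proof of this statement: it is a restatement of the quoted Theorem~\ref{Futer} of Futer--Kalfagianni--Purcell \cite{FDKE}, used as a black box, with the exponent $3/2$ accidentally dropped from the factor $\left(1-(2\pi/l_{min})^2\right)$. You read it the intended way, but be careful: since the factor is $<1$, your conclusion $\textup{Vol}(M)\geq\left(1-(2\pi/l_{min})^2\right)^{3/2}\textup{Vol}(M\setminus L)$ is \emph{weaker} than the inequality as literally printed, so your closing phrase ``which implies the stated inequality'' is false for the printed statement; you should instead say the printed statement is a typo for Theorem~\ref{Futer} (the paper's own subsequent derivation of the bound \eqref{3.4} on $l_{min}$ uses the $3/2$-version, confirming this reading). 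Note also that the published proof in \cite{FDKE} does not follow your route at all: it avoids sectional-curvature pinching and simplicial volume, instead constructing a comparison metric with a Ricci-type lower bound and invoking Besson--Courtois--Gallot volume rigidity (in its finite-volume form due to Boland--Connell--Souto). That choice is telling, because the sharp sectional pinch your argument needs is exactly the problematic point.

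Concretely, the gap is that your proposal asserts, and defers, the entire quantitative content of the theorem: the simultaneous achievability of (iii) $K_{\bar g}\leq -a^2$ with $a^2=1-(2\pi/l_{min})^2$ at \emph{every} point and in \emph{every} $2$-plane, together with $\mathrm{Vol}(V_i,g_{V_i})\geq\mathrm{Vol}(C_i)$. The constant $a^2$ does arise naturally in the radial--meridian plane: matching a horospherical boundary ($f'/f=1$) to a smooth core ($f(0)=0$, $f'(0)=1$) with $f''/f\geq a^2$ forces the model profile $f(r)=\sinh(ar)/a$ and yields meridian length $2\pi/\sqrt{1-a^2}$, i.e.\ $a^2=1-(2\pi/l)^2$. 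But the pinch must also hold in the mixed toral plane, where $K(\partial_\mu,\partial_\lambda)=-\frac{f'h'}{fh}$, and the boundary condition $h'/h=1$ on $\partial C_i$ is incompatible with keeping this quantity $\leq -a^2$ throughout: with the natural longitude profile $h\propto\cosh(ar)$ one gets $h'/h=a\tanh(aR)=a^2\neq 1$ at the boundary, and any transition forcing $h'/h$ down toward values of order $a^2$ makes $-\frac{f'h'}{fh}$ of order $-a^3>-a^2$, violating the pinch. The Bleiler--Hodgson construction guarantees only \emph{some} negative upper curvature bound when $l>2\pi$, not this sharp one, and the volume-retention requirement $\mathrm{Vol}(V_i)\geq\mathrm{Vol}(C_i)$ is an additional unverified constraint pulling in the opposite direction, as you yourself observe. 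A secondary issue: even granting the metric, your step $\mathrm{Vol}(M,\tilde g)\leq v_3\,\lVert M\rVert$ for a metric of variable curvature $K\leq -1$ is not formal Gromov straightening; it needs the nontrivial theorem that geodesic simplices in such manifolds have volume at most $v_3$ (Gromov--Thurston, with details due to Inoue--Yano), which must be cited, not assumed. As it stands, then, your text is a plausible strategy sketch whose ``technically delicate heart'' is precisely the theorem being proved, and the specific sectional-curvature form in which you cast it appears to fail; completing the argument honestly would mean either repairing the warped-product analysis with non-sharp constants (losing the stated factor) or switching, as \cite{FDKE} do, to Ricci bounds plus volume rigidity.
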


By solving the inequality and using the fact given in the theorem we get
 
\begin{align}
l_{min}\leq \dfrac{2\pi}{\sqrt{1-\left( \dfrac{V}{\textup{Vol}(M \setminus L)}\right)^{2/3}}}.\label{3.4}
\end{align}

Thus the minimal parabolic length bounded by a function of $x$ with $x > V $ (by (\ref{Thu2})) , we have

Therefore, finally we get
\begin{align}
	l_{min}\leq \dfrac{2\pi}{\sqrt{1-\left( \dfrac{V}{x}\right)^{2/3}}} 
\end{align}

Now apply the Lemma (\ref{th*}) for each $n$ we can bound trace modulus of loxodromic elements in $\Gamma_{L}$ by $\displaystyle \sqrt{l^4n^2 +4}$. 
Thus  trace modulus also bounded by the decreasing functions.

\begin{align}
	G_n(x)=\sqrt{\frac{16\pi^4 n^2}{(1-(V/x)^{2/3})^2}+4}.
\end{align}

%

Now, we are ready to present the proof for the main result, i.e. Theorem (\ref{Main Rresult}) in the next section.

\subsection{Proof of Theorem (\ref{Main Rresult})}.

According to the above discussion, we got two set of sequence functions, for every natural number $n$
	
	\begin{align}
	\displaystyle F_n(x)=\sqrt{{2n^2C^{4/3}_  0x^{4/3}}+4},
	\end{align}
	and
	
	\begin{align}
		G_n(x)=\sqrt{\frac{16\pi^4 n^2}{(1-(V/x)^{2/3})^4}+4}.
	\end{align}
	
Now observe that $ F_n(x)$ are increasing and $ G_n(x)$ are decreasing functions for each $n$. If there is a common value of these two functions, each $n$ value should be unique. If we plug in the common value in $ F_n(x)$, (As $ F_n(x)$ is increasing for each $n$) this gives an upper bound of the trace modulus of the loxodromic elements for each $n$. Now for each $n$ equating both of them we get 
	
	\begin{align*}
		&F_n(x) =G_n(x)\\
		&\implies  x  = \left(V^{2/3}+\dfrac{4{\pi }^2}{\sqrt{2}C_0^{2/3} }\right)^{3/2}
	\end{align*}
	
Now by applying Theorem (\ref{th23}), we get the $n^{th}$ geodesic bound and the bound is given by
	
\begin{align*}
	l_n \leq \log({F_n(x)}^2+4)=&\log\left[2n^2 C_0^{4/3}\left(V^{2/3}+\dfrac{4\pi^2}{\sqrt{2}C_0^{2/3}}\right)^{2} + 8 \right]\\
	 = &\log\left[ n^2\left(\sqrt{2}\left(C_0V\right)^{2/3} + 4\pi^2\right)^{2}+8 \right]. 
\end{align*}
	
$\hfill\square$

\section{Acknowledgment}
The author would like to thank the Department of Mathematics, Indian Institute of Technology Guwahati for its support.


\end{document}